\title{The Homotopy Method Revisited: Computing Solution Paths of \( \ell_1 \)-Regularized Problems}
\author{Björn Bringmann, Daniel Cremers, Felix Krahmer, Michael Möller}
\DeclareMathOperator{\LS}{LS}
\begin{document}

\maketitle

\begin{abstract}
\( \ell_1 \)-regularized linear inverse problems are frequently used in signal processing, image analysis, and statistics. The correct choice of the  regularization parameter \( t \in \mathbb{R}_{\geq 0} \) is a delicate issue. Instead of solving the variational problem for a fixed parameter, the idea of the homotopy method is to compute a complete solution path \( u(t) \) as a function of \( t \). In a celebrated paper by Osborne, Presnell, and Turlach, it has been shown that the computational cost of this approach is often comparable to the cost of solving the corresponding least squares problem. Their analysis relies on the one-at-a-time condition, which requires that different indices enter or leave the support of the solution at distinct regularization parameters. In this paper, we introduce a generalized homotopy algorithm based on a nonnegative least squares problem, which does not require such a condition, and prove its termination after finitely many steps. At every point of the path, we give a full characterization of all possible directions. To illustrate our results, we discuss examples in which the standard homotopy method either fails or becomes infeasible. 
To the best of our knowledge, our algorithm is the first to provably compute a full solution path for an arbitrary combination of an input matrix and  a data vector.
\end{abstract}



\section{Introduction}

In recent years, sparsity promoting regularizations for inverse problems played an important role in many fields such as image and signal analysis \cite{Rudin1992,Candes2006a} and statistics \cite{Tibshirani1996}.
The typical setup is that one tries to recover an unknown signal \( \hat{u} \in \mathbb{R}^N \) from linear measurements \( { f = A\hat{u} + \eta \in \mathbb{R}^m } \) under the model assumption that \( \hat{u} \) is approximately sparse, i.e., has only few significant coefficients. Here \( \eta \) is  typically small and represents measurement noise. A common approach is to minimize the energy functional
\begin{equation*}
u \mapsto  E_t(u):= \half \| Au -f \|_2^2 + t \| u \|_1 ~,
\end{equation*}
where \( A\in \mathbb{R}^{m\times N} \) is a matrix,\(~f \in \mathbb{R}^m \) the data vector, and \( t > 0 \) is a regularization parameter. In the statistics literature, this method is called the Lasso \cite{Tibshirani1996},
while the inverse problems literature mostly refers to it as \( \ell_1 \)-regularization. The choice of the regularization parameter \( t  \) often proves to be difficult. While choosing  a small \( t \) yields unnecessarily noisy reconstructions,  choosing a large \( t \) diminishes the features of the original signal \( \hat{u} \). An approach to this problem is to compute a minimizer \( u(t) \) for every \( t > 0 \), and subsequently choose a suitable regularization parameter, for instance by visual inspection of this family of solutions. \\
  
A popular algorithm to compute the full solution path is the so-called homotopy method. 
The homotopy method is based on the observation that there always exists a piecewise linear and continuous solution path \( t \mapsto u(t) \), see Figure \ref{hom_intro:fig_example} for an example. The main idea of the homotopy method is to start at a large parameter \( t^0 \), so that the unique solution is \( u(t^0)= 0 \), and then follow the solution path in the direction of decreasing \( t \). At every kink of the solution path, the classical homotopy method \cite{Osborne2000,Efron2004,Donoho2008} computes a new direction by solving a linear system. As it turns out, the computational cost of the classical homotopy method is often comparable to the cost of solving a single minimization problem \( u(t) \in \argmin_u E_t(u) \) or solving the least squares problem \( u = A^\dagger f \). 
For this reason the homotopy method has proven to efficiently compute reconstructions when the noise level is unknown - provided the output is really a solution path.
This has been shown to be the case given a so-called \emph{one-at-a-time} condition \cite{Osborne2000,Efron2004,Tibshirani2013}, cf. Definition \ref{alg_hom_other:def_one_at_a_time}. Loosely speaking, this conditions requires that at every kink only one index joins or leaves the support of \( u(t) \). The first works \cite{Osborne2000,Efron2004} additionally required the uniqueness of the solution path \( t\mapsto u(t) \), i.e., 
they required that \( u(t) \) is the only minimizer of \( E_t \) for every \( t > 0 \). In \cite{Tibshirani2013}, the homotopy method is extended to the case of non-uniqueness; again the analysis implicitly assumes the one-at-a-time condition (see Section  \ref{subsection:standard_homotopy} for a detailed discussion). \\ 
\begin{figure}
\begin{centering}
\includegraphics[width=0.5\textwidth,keepaspectratio=true]{./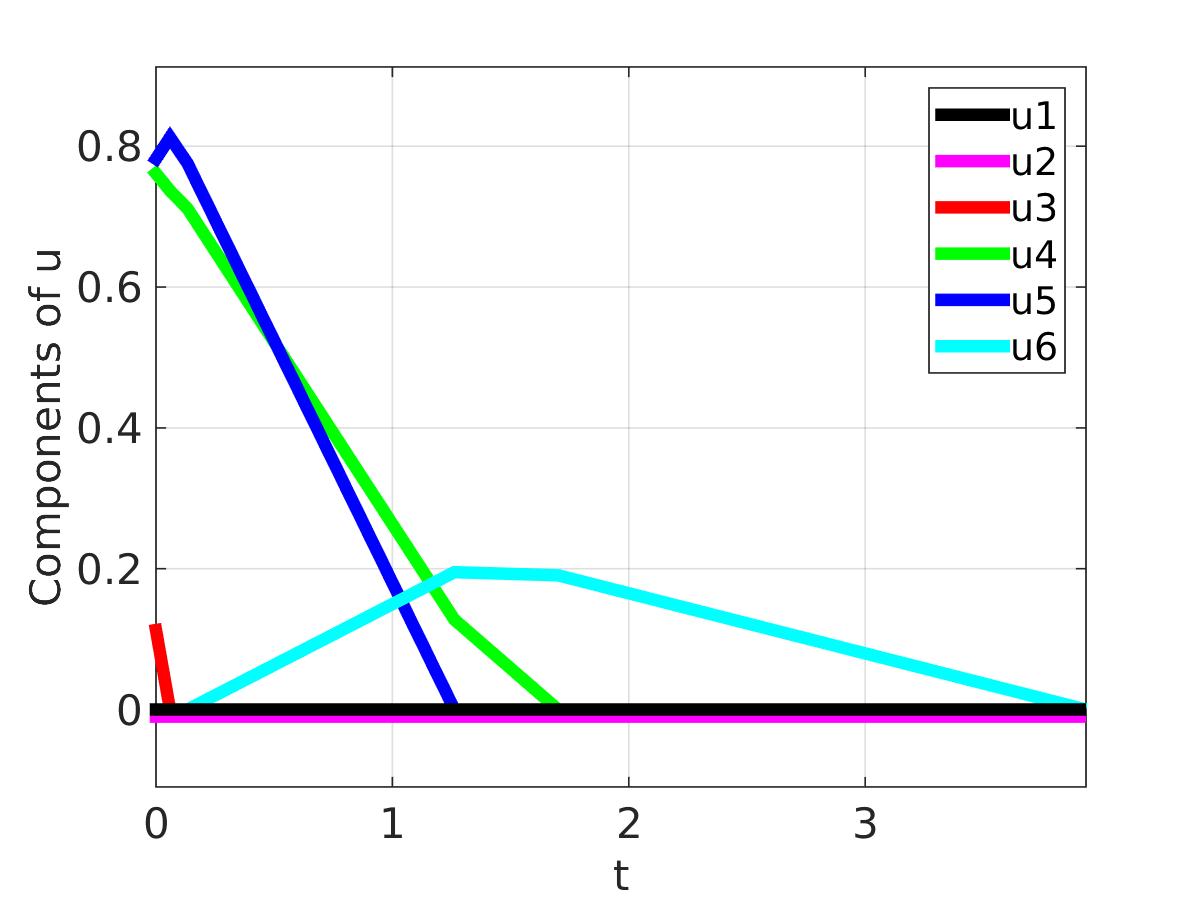}
\caption{We plot the different components  of \( u(t) \) corresponding to \( A \in \mathbb{R}^{3\times 6} \) and \(  f \in \mathbb{R}^3 \) with i.i.d. standard Gaussian entries  \( A_{ij} \) and \(f_i \). The solution path \( u(t) \) is piecewise linear.}
\label{hom_intro:fig_example}
\end{centering}
\end{figure}

The one-at-a-time condition is known to hold in various scenarios. 
For example, empirical observations indicate that it is true with high probability for input \({ A\in \mathbb{R}^{m\times N} }\) and \( f \in \mathbb{R}^m \)
drawn from independent continuous probability distributions. Also uniqueness of the solution path holds in many cases. 
Necessary and sufficient conditions for the uniqueness of minimizers have been established in  \cite{Foucart2013,Zhang2013,Zhang2014}. In \cite{Tibshirani2013}, it has been shown that, if the columns of \( A \) are independent and drawn from continuous probability distributions, the minimizer \( u(t) \) is almost surely unique for every \( f \in \mathbb{R}^m \) and \( t \in \mathbb{R}_{> 0} \).\\
However, both the one-at-a-time condition and uniqueness are known to be violated in certain cases \cite{Loris2008,Tibshirani2013}.  For instance, when the entries  of \( A \) and \( \hat{u} \)  are chosen as independent random signs and the measurements are exact, i.e., \( \eta= 0 \), the one-at-a-time condition is regularly violated. In such cases it has been observed that standard homotopy implementations can fail to find a solution path \cite{Loris2008}. If in addition uniqueness is violated, even the finite termination property of the homotopy method may no longer 
hold, see Proposition \ref{hom_alg:prop_infinite_kinks}. \\

In this paper, we propose a generalized homotopy method, which addresses these issues. In contrast to the classical homotopy method \cite{Osborne2000,Efron2004}, which solves a linear system at each kink, the generalized homotopy method solves a nonnegative 
least squares problem. The main result of this paper, Theorem \ref{alg_hom:thm_main}, shows that this new algorithm always computes a full solution path in finitely many steps, even without a  one-at-a-time assumption.
Along the way, we give a full characterization of all directions which linearly extend a given partial solution path, see Theorem \ref{directions:thm_characterization}. Our characterization is of interest even under the one-at-a-time condition, since it provides a unified treatment of both hitting and leaving indices (cf.~\cite{Efron2004}).
We also show that, under the assumptions of \cite{Osborne2000,Efron2004}, the generalized homotopy method and the standard homotopy method coincide.

\subsection{Outline}
In Section \ref{section:notation_hom} we set up our notation and recall some  basic facts commonly used in the sparse recovery literature. The set of all possible directions (cf. Definition \ref{directions:def_D}) is characterized in Section  \ref{section:hom_possible_directions}. In Section \ref{section:hom_alg} we 
propose the generalized homotopy method, and prove that it always computes a solution path.  In Section \ref{section:hom_alg_other}
we compare the generalized homotopy method with the standard homotopy method and the adaptive inverse scale space method \cite{Burger2012}.

\section{Notation and Background}\label{section:notation_hom}

For  \( A \in \mathbb{R}^{m \times N} \) we will denote the \( i^{\operatorname{th}} \) column of \( A \) by \( A_i \in \mathbb{R}^N \). Similary, for a subset \( \mathcal{S} \subseteq [N] \),  \( A_\mathcal{S} \in \mathbb{R}^{m \times |\mathcal{S}| }\) is the submatrix of \( A \) with columns indexed by \( \mathcal{S} \). Furthermore, with a slight abuse of notations, we write \( A_\mathcal{S}^T = (A_\mathcal{S})^T \). The pseudoinverse of \( A \) is denoted by \( A^\dagger \). \\
For \( t \in \mathbb{R}_{\geq 0 } \) and \( u \in \mathbb{R}^N \), the equicorrelation set \( \mathcal{E}(t,u) \) is defined as
\begin{equation}\label{notation:eq_def_equicorrelation}
\mathcal{E}(t,u):= \{ i \in [N]\colon |A_i^T ( Au -f ) |= t \}~.
\end{equation}
Indeed, for least squares solutions  \( u_{\LS} \in  \argmin_u \| A u -f \|_2^2 \), we have that \( A^T A u_{\LS} = A^T f \). 
Even though this equation is no longer true for solutions of the \( \ell_1 \)-regularized problem, it turns out to be useful to distinguish indices \( i \in [N] \) according to the magnitude of \( A_i^T (Au -f ) \).
The active set \( \mathcal{A}(u) \) is the support of \( u \), i.e.,  
\begin{equation*}
\mathcal{A}(u):=\operatorname{supp}(u)= \{ i \in [N]\colon u_i \not  = 0 \}~.
\end{equation*}
For a fixed regularization parameter \( t \geq 0 \) and vector \( f \in  \mathbb{R}^m \), we define the set of minimizers \( U_t(f) \) by
\begin{equation}\label{notation:eq_def_U_t}
U_t(f) :=
\begin{cases}
\begin{tabular}{cl}
 \( \argmin_{u \in \mathbb{R}^N } \half \| A u - f \|_2^2 + t ~ \| u \|_1 \) & if \( t>0 \) \\
 \( \argmin_{u \in \mathbb{R}^N} \| u \|_1 \)  \quad s.t. \( A^T(Au-f) = 0 \) &if \( t=0 \)
 \end{tabular}
 \end{cases}~.
\end{equation}
We will often drop the dependence on \( f \) and simply write \( U_t \).\\

We recall some basic facts about the variational problem \eqref{notation:eq_def_U_t}. A proof is included for the reader's convenience.
\begin{lemma}[\cite{Zhang2013}]\label{notation:lem_basic_facts}
Let \( u_1(t),u_2(t) \in U_t \) be two minimizers. Then one has that:
\begin{enumerate}[label=(\alph*),leftmargin=*] 		
\item \(  Au_1(t) = Au_2(t)\); \label{notation:enum_image_equality}
\item  \label{notation:enum_subgradient}
for \( t > 0 \), the map \( p \) given by 
\end{enumerate}
\begin{equation}\label{notation:eq_the_subgradient}
p(t):=\frac{1}{t} A^T(f-Au(t)) 
\end{equation}
\begin{description}[leftmargin=4.4ex,labelindent=3.2ex]
\item[] satisfies \( p(t) \in \partial \| u(t) \|_1  \) for all \( t > 0 \) and is independent of the specific choice of \( u(t) \in U_t \);
\end{description}
\begin{enumerate}[label=(\alph*),leftmargin=*,resume]
\item \label{notation:enum_act_equi}
\( \| A^T ( f - Au(t) ) \|_\infty \leq t \quad \text{and} \quad \mathcal{A}(u(t))\subseteq \mathcal{E}(t):= \mathcal{E}(t,u(t)) ~. \)
\end{enumerate}
\end{lemma}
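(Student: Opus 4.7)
The plan is to exploit convexity of $E_t$ and the first-order optimality conditions (subdifferential calculus for $\|\cdot\|_1$). The three parts are closely linked: (a) will furnish the well-definedness in (b), and (b) will immediately give (c). I expect no serious obstacle here; the only subtle point is handling the case $t = 0$ separately in (a), since strict convexity of the squared residual does not directly apply.

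For part (a) with $t > 0$, I would argue as follows. The functional $E_t$ is convex, so if $u_1(t), u_2(t) \in U_t$, then for any $\lambda \in (0,1)$ the convex combination $u_\lambda := \lambda u_1(t) + (1-\lambda) u_2(t)$ is also a minimizer, and
\[
E_t(u_\lambda) \;=\; \lambda E_t(u_1(t)) + (1-\lambda) E_t(u_2(t)).
\]
Since $\|\cdot - f\|_2^2$ is strictly convex on $\mathbb{R}^m$ and $\|\cdot\|_1$ is convex, equality can hold only when $Au_1(t) = Au_2(t)$. For $t = 0$, both $u_1,u_2$ lie in the affine subspace $\{u : A^T(Au - f) = 0\}$, so $A^TA(u_1 - u_2) = 0$, and taking inner product with $u_1 - u_2$ yields $\|A(u_1 - u_2)\|_2^2 = 0$, whence $Au_1 = Au_2$ again.

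For part (b), the key observation is that by (a) the quantity $Au(t)$, and hence $p(t)$, does not depend on the chosen minimizer. The subgradient inclusion $p(t) \in \partial \|u(t)\|_1$ then follows from the Fermat optimality condition applied to $E_t$: since $E_t$ is the sum of a differentiable convex term $u \mapsto \tfrac12\|Au-f\|_2^2$ (with gradient $A^T(Au-f)$) and the convex non-smooth term $t\|\cdot\|_1$, optimality of $u(t)$ is equivalent to
\[
0 \in A^T(Au(t) - f) + t\,\partial \|u(t)\|_1,
\]
which rearranges exactly to $\tfrac{1}{t}A^T(f - Au(t)) \in \partial \|u(t)\|_1$.

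Part (c) drops out immediately from (b). Since $\partial \|u\|_1 \subseteq [-1,1]^N$ for every $u \in \mathbb{R}^N$, the inclusion $p(t) \in \partial \|u(t)\|_1$ gives $\|p(t)\|_\infty \leq 1$, i.e., $\|A^T(f-Au(t))\|_\infty \leq t$. For the second assertion, if $i \in \mathcal{A}(u(t))$ then $u(t)_i \neq 0$, so the $i$th coordinate of any subgradient of $\|\cdot\|_1$ at $u(t)$ must equal $\mathrm{sign}(u(t)_i) \in \{\pm 1\}$; applied to $p(t)_i$, this forces $|A_i^T(f - Au(t))| = t$, i.e., $i \in \mathcal{E}(t)$.
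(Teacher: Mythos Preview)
Your proof is correct and follows essentially the same approach as the paper: strict convexity of the quadratic term for part (a) when $t>0$, the kernel identity $\Ker(A^TA)=\Ker(A)$ for $t=0$, and the first-order optimality condition $0\in A^T(Au(t)-f)+t\,\partial\|u(t)\|_1$ together with the explicit form of $\partial\|\cdot\|_1$ for parts (b) and (c).
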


\begin{remark} 
In the following,  \( p(t) \) always  refers to the subgradient  given by \eqref{notation:eq_the_subgradient}.
\end{remark}

\begin{proof}
For \( t= 0 \), \ref{notation:enum_image_equality} holds since the constraint \( A^T A u_1(t) = A^T A u_2(t) \) implies \( u_1(t) - u_2(t) \in \Ker(A^T A ) = \Ker(A) ~.\)  \\ 
For \( t > 0 \), \ref{notation:enum_image_equality} follows from the strict convexity of \( \|\cdot \|_2^2 \). Indeed, set \( v := \half u_1(t) + \half u_2(t) \). Then 
\begin{align}
&~\half \| A v - f \|_2^2 + t \| v \|_1  \nonumber \\
&\leq \half \left( \half \| Au_1(t) -f \|_2^2 + t \| u_1(t) \|_1 \right) +  \half \left( \half \| Au_2(t) -f \|_2^2 + t \| u_2(t) \|_1 \right)\label{directions:eq_basic_facts_proof_1}   \\~
&= \min_{u \in \mathbb{R}^N} \left( \half \| Au -f \|_2^2 + t \| u \|_1 \right) \nonumber~.
\end{align}
If  \( Au_1(t) \not =Au_2(t) \) , the inequality \eqref{directions:eq_basic_facts_proof_1} would be strict, leading to a contradiction. As a result,~\( Au(t) \) is independent of the minimizer chosen and \( \mathcal{E}(t) = \mathcal{E}(t,u(t)) \) is well-defined. \par
The statements \ref{notation:enum_subgradient} and \ref{notation:enum_act_equi} are an immediate consequence of the optimality condition 
\begin{equation*}
0 \in A^T(Au(t) -f ) + t ~\partial \| u \|_1 ~,
\end{equation*}
since the subdifferential of the \( \ell_1 \)-norm is given by 
\begin{equation*}
\partial \| u \|_1 = \{ p \in \mathbb{R}^N \colon p_i = \sgn u_i ~ \forall i \in \mathcal{A}(u)~ \text{and} ~ |p_i|\leq 1 ~ \forall i \not \in \mathcal{A}(u) \} ~.
\end{equation*} 
\end{proof}

\section{The Set of Possible Directions \( \Directions \)}\label{section:hom_possible_directions}

We aim to construct a piecewise linear and continuous function \( u\colon \mathbb{R}_{\geq 0 } \rightarrow \mathbb{R}^N \) satisfying 
\begin{equation}\label{directions:eq_variational_problem}
u(t) \in U_t = \argmin_{u \in \mathbb{R}^N } \half \| A u - f \|_2^2 + t ~ \| u \|_1 \quad \forall t > 0 
\end{equation}
and \( u(0) \in U_0 \). To this end we make the following ansatz. \\
Assume we already have a solution \( u(\fixedt) \in U_{\fixedt} \). Set \( u(t) = u(\fixedt) + ( \fixedt - t )~d \) and try to choose \( d \in \mathbb{R}^N \) such that  \( u(t) \in U_t \) for all \(  t \in [\fixedt-\delta,\fixedt] \), where \( \delta=\delta(\fixedt,u(\fixedt),d) > 0 \). This motivates the definition of the set of all possible directions \( \Directionsft \).

\begin{definition}\label{directions:def_D}
 Let \( \fixedt> 0 \) be a regularization parameter and \( u(\fixedt) \in U_{\fixedt} \) be a solution of the variational problem \eqref{directions:eq_variational_problem}. 
The set of all possible directions \( \Directionsft \) is defined as 
 \begin{equation*}
 \Directionsft = \left\{ d \in \mathbb{R}^N \colon \exists \delta \in (0,\hat{t}]~\text{s.t.} ~ u(t)=u(\fixedt)+(\fixedt-t) d \in U_t ~ \forall t \in [\fixedt-\delta,\fixedt] \right\}~.
 \end{equation*}
\end{definition}
We now state and prove the main theorem of this section.
\begin{theorem}\label{directions:thm_characterization}
The set of possible directions  \( \Directionsft \) at \( \left(\fixedt,u(\fixedt) \right) \) is the set of solutions to a nonnegative least squares problem. More precisely, set \begin{equation*}
 r(\fixedt)= f - Au(\fixedt) \quad \text{and} \quad p(\fixedt) = \frac{1}{\fixedt} A^T \left( f - Au(\fixedt) \right) ~.
 \end{equation*} 
Then we have that 
 \begin{equation}\label{directions:eq_characterization}
 \begin{aligned}
 \Directionsft = \argmin_{d \in \mathbb{R}^N} \| Ad - \frac{1}{\fixedt} r(\fixedt) \|_2^2 \quad \text{s.t.} \quad  d_i ~ p(\fixedt)_i &\geq 0 ~\forall i \in \mathcal{E}(\fixedt)\backslash \mathcal{A}(u(\fixedt))~, \\
 d_i &= 0 ~ \forall i \not \in \mathcal{E}(\fixedt) ~.
 \end{aligned}
\end{equation}
\end{theorem}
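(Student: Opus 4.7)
The plan is to translate the pointwise optimality condition \( p(t) \in \partial \|u(t)\|_1 \) for \( u(t) \in U_t \) (Lemma \ref{notation:lem_basic_facts}) into an algebraic system on \( d \), and then to recognize this system as the KKT conditions of the stated convex quadratic program. The key identity is that if \( u(t) = u(\fixedt) + (\fixedt - t)d \), then a direct computation gives
\[
p(t) \;=\; p(\fixedt) \,+\, \tfrac{\fixedt-t}{t}\bigl( p(\fixedt) - A^T A d \bigr),
\]
so \( d \in \Directionsft \) precisely when the right-hand side lies in \( \partial \|u(t)\|_1 \) for every \( t \) in some small left-neighborhood of \( \fixedt \).

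For the inclusion \( \Directionsft \subseteq \{\text{NNLS minimizers}\} \), I fix \( d \in \Directionsft \) and analyze each coordinate \( i \in [N] \) for \( t \) slightly below \( \fixedt \). If \( i \in \mathcal{A}(u(\fixedt)) \), continuity forces \( \sgn u(t)_i = \sgn u(\fixedt)_i = p(\fixedt)_i \), so \( p(t)_i \) is constant and the identity gives \( (A^T A d)_i = p(\fixedt)_i \). If \( i \in \mathcal{E}(\fixedt) \setminus \mathcal{A}(u(\fixedt)) \), then \( |p(\fixedt)_i| = 1 \); in the subcase \( d_i = 0 \), the constraint \( |p(t)_i| \leq 1 \) combined with \( p(\fixedt)_i = \pm 1 \) yields the one-sided bound \( (A^T A d)_i\, p(\fixedt)_i \geq 1 \); in the subcase \( d_i \neq 0 \), the definite sign of \( u(t)_i = (\fixedt-t)d_i \) for \( t < \fixedt \) forces \( \sgn d_i = p(\fixedt)_i \) (so \( d_i p(\fixedt)_i > 0 \)) and again \( (A^T A d)_i = p(\fixedt)_i \). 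Finally, if \( i \notin \mathcal{E}(\fixedt) \), then \( |p(\fixedt)_i| < 1 \) forbids \( u(t)_i \neq 0 \) by continuity, whence \( d_i = 0 \). These collected conditions are exactly the KKT system of the stated NNLS problem, with nonnegative multipliers \( \lambda_i := (A^T A d)_i p(\fixedt)_i - 1 \geq 0 \) for \( i \in \mathcal{E}(\fixedt) \setminus \mathcal{A}(u(\fixedt)) \) satisfying complementary slackness \( \lambda_i d_i = 0 \). Since the NNLS is a convex quadratic program, KKT is sufficient for global optimality, so \( d \) is a minimizer.

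For the reverse inclusion, I start from a KKT point \( d \), define \( u(t) \) and \( p(t) \) by the formulas above, and reverse each of the three coordinate arguments: the equality \( (A^T A d)_i = p(\fixedt)_i \) on the active set propagates the correct sign to \( p(t)_i \); the complementary slackness in the subcase \( i \in \mathcal{E}(\fixedt) \setminus \mathcal{A}(u(\fixedt)) \) with \( d_i \neq 0 \) yields exact sign matching; in the subcase \( d_i = 0 \), the inequality \( (A^T A d)_i p(\fixedt)_i \geq 1 \) is precisely what keeps \( p(t)_i \) on the correct side of \( \pm 1 \); and on \( i \notin \mathcal{E}(\fixedt) \), the strict inequality \( |p(\fixedt)_i| < 1 \) persists on a small left-interval. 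One chooses a single \( \delta > 0 \) small enough that all of these finitely many continuity and strict-inequality conditions hold simultaneously, giving \( u(t) \in U_t \) on \( [\fixedt - \delta, \fixedt] \).

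The main obstacle I anticipate is the subcase \( i \in \mathcal{E}(\fixedt) \setminus \mathcal{A}(u(\fixedt)) \) with \( d_i = 0 \). Here \( u_i \) is not moved, yet the subdifferential constraint \( |p(t)_i| \leq 1 \) must still be verified as \( t \to \fixedt^- \); because \( p(\fixedt)_i \) already sits at the boundary \( \pm 1 \), the condition only produces the one-sided inequality on \( (A^T A d)_i \) rather than an equality, and the corresponding KKT multiplier \( \lambda_i \) is genuinely nonzero in general. This asymmetry is precisely what turns the characterization from the linear system used in the classical homotopy method into a nonnegative least squares problem, and any proof has to handle this boundary behavior carefully.
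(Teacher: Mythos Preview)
Your proposal is correct and follows essentially the same approach as the paper: both proofs hinge on the identity \( p(t) = p(\fixedt) + \tfrac{\fixedt-t}{t}\bigl(p(\fixedt) - A^T A d\bigr) \), split the analysis into the three coordinate cases \( i \in \mathcal{A} \), \( i \in \mathcal{E} \setminus \mathcal{A} \), \( i \notin \mathcal{E} \), and identify the resulting conditions with the KKT system of the NNLS problem. The only cosmetic differences are that the paper proves the inclusion NNLS \( \subseteq \Directionsft \) first and parametrizes the multiplier as \( \lambda_i = (p(\fixedt) - A^T A d)_i \) with sign condition \( \lambda_i\, p(\fixedt)_i \leq 0 \), whereas you reverse the order and use \( \lambda_i = (A^T A d)_i\, p(\fixedt)_i - 1 \geq 0 \); these are equivalent up to multiplying by \( -p(\fixedt)_i \).
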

\begin{remark}\label{directions:rem_sign_constraint}
The major difference to the standard homotopy method (cf. Algorithm \ref{alg_hom_other:standard_algorithm}) is the condition \( d_i p(\fixedt)_i \geq 0 \) for all \( i \in \mathcal{E}(\fixedt) \backslash \mathcal{A}(u(\fixedt)) \).
In fact, if \( u(t) \in U_t \) for all \( t \in [\fixedt-\delta,\fixedt] \), then the direction \( d \) necessarily satisfies this condition.
To see this, let \( i \in \mathcal{E}(\fixedt) \backslash \mathcal{A}(u(\fixedt)) \) and \( p(\fixedt)_i = 1 \). It follows that \( u(t)_i \geq 0 \) for all \( t \in [\fixedt-\delta,\fixedt] \) because \( p(t) \) is continuous and \( p(\fixedt)_i = 1 \). Therefore \( u(\fixedt)_i + ( \fixedt - t ) d_i \geq 0 \), which, since \( i \not \in \mathcal{A}(u(\fixedt)) \), implies that \( p(\fixedt)_i d_i = d_i \geq 0 \). \\
As we will discuss in Section \ref{subsection:standard_homotopy} below, this condition is sometimes violated for directions computed by the standard homotopy method; so an extra condition is indeed necessary.
\end{remark}
\begin{remark}
By a change of variable, the constraints \( p(\fixedt)_i d_i \geq 0 \) are easily transformed into nonnegativity constraints, which makes \eqref{directions:eq_characterization} a nonnegative least squares problem. 
\end{remark}
\begin{proof}
The strategy of the proof is as follows: We characterize the solutions of the nonnegative least squares problem by the Karush Kuhn Tucker (KKT) conditions (equations \eqref{directions:eq_KKT_mult}-\eqref{directions:eq_KKT_ineq_4}), and compare them componentwise to a characterization of the set of possible directions. \\
Throughout the proof, let \( \mathcal{E}:=\mathcal{E}(\fixedt),~ \mathcal{A}:=\mathcal{A}(u(\fixedt)),~\text{and}~ \Directions:=\Directionsft \). \\

Let us start by stating the KKT conditions for the nonnegative least squares problem in  \eqref{directions:eq_characterization}:
\(d\in \mathbb{R}^N \) is a minimizer of \eqref{directions:eq_characterization} if and only if
there exist \( \lambda,\theta \in \mathbb{R}^N \) such that 
 
\begin{align}
A^T A d - p(\fixedt) + \lambda + \theta &= 0 ~, \label{directions:eq_KKT_mult} \allowdisplaybreaks \\
d_{\mathcal{E}^C} &= 0		~, \label{directions:eq_KKT_eq_1}\\
\theta_{\mathcal{E}} &= 0	~, \label{directions:eq_KKT_eq_2} \allowdisplaybreaks \\
d_i ~ p(\fixedt)_i & \geq 0 \quad \forall i \in \eac ~,  \label{directions:eq_KKT_ineq_1}\\
\lambda_{ ( \mathcal{E}\backslash\mathcal{A} )^C } &= 0  ~, \label{directions:eq_KKT_ineq_2}\\
\lambda_i ~ p(\fixedt)_i &\leq 0  \quad \forall i \in \eac~,		\label{directions:eq_KKT_ineq_3}		\\
\lambda_i ~ d_i &= 0 \quad \forall i \in \eac \label{directions:eq_KKT_ineq_4}~.
\end{align}
We now show that every solution \( d \) of this system is a possible direction. We need to prove that there exists a \( \delta > 0 \) such that \( u(t)= u(\fixedt)+ ( \fixedt - t ) d \in U_t \)  for all \( t \in [\fixedt-\delta,\fixedt] \). Recalling the optimality condition 
\begin{equation*}
0 \in A^T ( Au(t) - f ) + t ~ \partial \| u(t) \|_1 ~, 
\end{equation*}
it suffices to show that \( p(t) := \frac{1}{t} A^T ( f - A u(t) ) \in \partial \| u(t) \|_1  \). \\
We begin by rewriting \( p(t) \). By inserting the definition of \( u(t) \), it follows that
\begin{equation}\label{directions:eq_p_rewrite}
\begin{aligned}
p(t)&= \frac{1}{t} A^T ( f - Au(\fixedt) - (\fixedt -t ) Ad ) \\
	&= \frac{\fixedt}{t} p(\fixedt) - \frac{\fixedt-t}{t} A^T A d \\
	&= p(\fixedt) + \frac{\fixedt-t}{t} ( p(\fixedt) - A^T A d ) ~.
\end{aligned}
\end{equation}
We argue componentwise proving that \( p(t)_i \in \partial | u(t)_i | \) for all \( i \in [N] \). We distinguish the 
three different cases \( i \in \mathcal{A} \), \( i \in \mathcal{E} \backslash \mathcal{A} \), and \( i \in \mathcal{E}^C \). \par \vspace{1ex}
\emph{Case 1: \( i \in \mathcal{A} \).} Since \( u(t) \) is continuous, the equality \( \sgn(u(t)_i)=\sgn(u(\fixedt)_i) \) holds for all \( t \) in some small interval  \( [\fixedt-\delta_\mathcal{A},\fixedt] \).
Using \eqref{directions:eq_KKT_mult},\eqref{directions:eq_KKT_eq_2}, and \eqref{directions:eq_KKT_ineq_2}, it follows that 
\begin{equation*}
p(t)_i = p(\fixedt)_i + \frac{\fixedt-t}{t} \left( p( \fixedt ) - A^TAd \right)_i = p(\fixedt)_i ~.
\end{equation*}\par
\emph{Case 2: \( i \in \eac \).} It follows that 
\begin{equation*}
u(t)_i = u(\fixedt)_i + ( \fixedt- t ) d_i \overset{\eqref{directions:eq_KKT_ineq_1}}{=} ( \fixedt -t ) |d_i | p(\fixedt)_i ~.
\end{equation*}
From \eqref{directions:eq_KKT_mult}, \eqref{directions:eq_KKT_eq_2}, and \eqref{directions:eq_p_rewrite}, we deduce that 
\begin{equation*}
p(t)_i = p(\fixedt)_i + \frac{\fixedt-t}{t} \lambda_i ~.
\end{equation*}
If \( d_i \not = 0 \), then by complementary slackness \eqref{directions:eq_KKT_ineq_4}, \( ~ \lambda_i = 0 \) holds. Therefore  \( p(t)_i = p(\fixedt)_i \) and \( \sgn(u(t)_i) = p(\fixedt)_i = p(t)_i \). \\
If \( d_i = 0 \), we have that  \( u(t)_i = 0 \).  Since \( \lambda_i ~ p(\fixedt)_i \leq 0 \), it follows  that for some  \( \delta_{\mathcal{E}\backslash \mathcal{A}} > 0 \)
\begin{equation*}
|p(t)_i| = | p(\fixedt) + \frac{\fixedt-t}{t} \lambda_i |\leq 1 \quad \forall t \in [\fixedt- \delta_{\mathcal{E}\backslash \mathcal{A}} ,\fixedt]~.
\end{equation*} \par 
\emph{Case 3: \( i \in \mathcal{E}^C \).} Lemma \ref{notation:lem_basic_facts} and equation \eqref{directions:eq_KKT_eq_1}  yield \( u(\fixedt)_{\mathcal{E}^C} = 0 \) and \( d_{\mathcal{E}^C} = 0 \). Thus, it follows that \( u(t)_i = 0 \). Since \( |p(\fixedt)_i |< 1 \) and \( p(t) \) is continuous, there exists a \( \delta_{\mathcal{E}^C} > 0 \) such that we have \( |p(t)_i|\leq 1 \) for all \( t\in [\fixedt-\delta_{\mathcal{E}^C},\fixedt] \). \\

Setting \( \delta = \min \{ \delta_\mathcal{A}, \delta_{\mathcal{E} \backslash \mathcal{A} }, \delta_{\mathcal{E}^C} \} \), we conclude that \( p(t) \in \partial \| u(t) \|_1 \) for all \( t \in [\fixedt-\delta,\fixedt] \), and hence that
\( d \) is a valid direction.  ~ \vspace{2ex} \\

It remains to show that every possible direction \( d \) is a solution to the nonnegative least squares problem. To this end, we show that \( d \) satisfies the KKT conditions \eqref{directions:eq_KKT_mult}-\eqref{directions:eq_KKT_ineq_4}. Set 
\begin{alignat*}{3}
\lambda_{\eac}&:= (p(\fixedt)-A^TAd)_{\eac}~, &\quad \lambda_{(\eac)^C} &:= 0~, \\
\theta_{\mathcal{E}^C} &:= ( p(\fixedt)-A^TAd)_{\mathcal{E}^C}~, & \theta_{\mathcal{E}}&:= 0 ~.
\end{alignat*}
Then \eqref{directions:eq_KKT_eq_2} and \eqref{directions:eq_KKT_ineq_2} are satisfied by definition. \\
As there exists a  \( \delta > 0 \) such that  \( u(t) \in U_t \)  for all \( [\fixedt-\delta,\fixedt] \), we conclude
\begin{equation}\label{directions:eq_p_rewrite_2}
p(t)= p(\fixedt) + \frac{\fixedt-t}{t} (p(\fixedt)-A^TAd) = \frac{1}{t} A^T(f-Au(t)) \in \partial \| u(t) \|_1 \quad \forall t \in [\fixedt-\delta,\fixedt]~.
\end{equation}
Use this observation to first prove the multiplier equation \eqref{directions:eq_KKT_mult}, then the feasibility condition \eqref{directions:eq_KKT_eq_1}, and finally the equations \eqref{directions:eq_KKT_ineq_1}, \eqref{directions:eq_KKT_ineq_3}, and \eqref{directions:eq_KKT_ineq_4} concerning \( \lambda \). \\

To prove \eqref{directions:eq_KKT_mult}, we need to show \( (A^T A d )_\mathcal{A} = p(\fixedt)_\mathcal{A} \).  Since \( u(t) \) is continuous, 
\begin{equation*} 
\sgn(u(t)_i) = \sgn(u(\fixedt)_i) 
 \end{equation*}
for all  \( t \in [\fixedt-\delta, \fixedt ] \) and \( i \in \mathcal{A} \). Therefore \( p(t)_i = p(\fixedt)_i \), which together with \eqref{directions:eq_p_rewrite_2} yields \( (A^T A d)_i = p(\fixedt)_i \). \\
From  Lemma \ref{notation:lem_basic_facts} and the continuity of \( p(t) \), it follows that \( \mathcal{A}(u(t)) \subseteq \mathcal{E}(t) \subseteq \mathcal{E}(\fixedt) = \mathcal{E}\). Thus \( \supp(d) \subseteq \mathcal{E} \), which proves \eqref{directions:eq_KKT_eq_1}. \\
 
To conclude, we prove \eqref{directions:eq_KKT_ineq_1}, \eqref{directions:eq_KKT_ineq_3}, and \eqref{directions:eq_KKT_ineq_4}. For this, let \( {i \in \eac} \). First, assume that \( d_i \not = 0 \). Since  \( u(\fixedt)_i = 0 \), it follows that 
\begin{equation*}
p(t)_i = \sgn(u(t)_i) = \sgn( d_i ) \quad \forall t \in [\fixedt-\delta,\fixedt]~.
\end{equation*}
Thus \( p(t)_i \) is constant, and \eqref{directions:eq_p_rewrite_2} yields 
\( 
\lambda_i = (p(\fixedt)- A^TAd)_i = 0 
\). \\
Second, assume that \( d_i = 0 \). Then \eqref{directions:eq_KKT_ineq_1} and \eqref{directions:eq_KKT_ineq_4} follow immediately, and 
\begin{equation*}
p(t)_i \overset{\eqref{directions:eq_p_rewrite_2}}{=} p(\fixedt)_i + \frac{\fixedt-t}{t} \lambda_i ~.
\end{equation*}
If \eqref{directions:eq_KKT_ineq_3} were violated, then for all \( t \in [\fixedt-\delta,\fixedt) \)  we would have
\begin{equation*}
|p(t)_i|= |p(\fixedt)_i + \frac{\fixedt-t}{t} \lambda_i | > | p(\fixedt)_i |=1 ~,
\end{equation*}
which would contradict \( p(t) \in \partial \| u(t) \|_1 \). 
\end{proof}

\begin{remark}\label{directions:rem_explicit_t}
To implement the generalized homotopy method, we need an explicit expression for the maximal step size \( \delta=\delta(\fixedt,u(\fixedt),d) > 0 \). For this, define \( s_{\mathcal{A}},s_{\mathcal{E}\backslash \mathcal{A} },  s_{\mathcal{E}^C},s \in \mathbb{R} \) as
\begin{alignat*}{2}
s_\mathcal{A} 						&:= \max_{ i \in \mathcal{A} } \{ \nu_i \colon d_i \not = 0 \text{ and } \nu_i < \fixedt \}				  \text{ with }   \nu_i 		 := \frac{ u(\fixedt)_i + \fixedt d_i}{d_i} ~, \\
s_{\mathcal{E}\backslash \mathcal{A} } &:= \max_{ i \in \mathcal{E} \backslash \mathcal{A} } \{ \frac{|\lambda_i|}{|\lambda_i|+2} \fixedt \} ~,  	 			  		 						\\
s_{\mathcal{E}^C} 					&:= \max_{i \in \mathcal{E}^C } \{ \mu_i^\pm \colon \mu_i^\pm < \fixedt \}					          \text{ with }   \mu_i^{\pm}   := \begin{cases} \begin{tabular}{ll}
																																			 \( \fixedt \frac{ (p(\fixedt) - A^T A d )_i }{ \pm 1 - ( A^T A d )_i } \) & if \( \pm 1 \not = (A^TAd)_i \) \\
																																			 \( 0 \)													& else 
																																			 \end{tabular} \end{cases} \hspace{-3ex},\\
s								&:= \max \{ s_\mathcal{A}, s_{\mathcal{E}\backslash \mathcal{A} }, s_{\mathcal{E}^C}, 0 \}	~.																												\end{alignat*}
Then, the maximal step size is given by \( \delta= \fixedt-s  \). This follows directly from the preceding proof.
\end{remark}

\begin{corollary}\label{directions:cor_finitely_many}
There exist only finitely many sets of possible direction \( \Directionsft \).
\end{corollary}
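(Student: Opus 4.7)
The strategy is to show that, although the nonnegative least squares problem in Theorem \ref{directions:thm_characterization} has data that depends continuously on $\fixedt$, its argmin actually depends only on finitely many discrete pieces of data, namely the triple $(\mathcal{E}(\fixedt),\mathcal{A}(u(\fixedt)), p(\fixedt)|_{\mathcal{E}(\fixedt)})$.

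The first step is to massage the objective. Since any feasible $d$ is supported in $\mathcal{E}:=\mathcal{E}(\fixedt)$, we can replace $Ad$ by $A_{\mathcal{E}}d_{\mathcal{E}}$. Expanding the squared norm and using the identity $A^T r(\fixedt) = \fixedt\, p(\fixedt)$ (which is just the definition of $p(\fixedt)$), the objective becomes, up to an additive constant that is independent of $d$,
\begin{equation*}
d_{\mathcal{E}}^T A_{\mathcal{E}}^T A_{\mathcal{E}} d_{\mathcal{E}} \;-\; 2\, p(\fixedt)_{\mathcal{E}}^T d_{\mathcal{E}} ~.
\end{equation*}
Hence the $\fixedt$-dependent data $\tfrac{1}{\fixedt}r(\fixedt)$ enters the minimization only through the vector $p(\fixedt)_{\mathcal{E}}$.

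Next I would observe that $p(\fixedt)_{\mathcal{E}}$ is a sign vector. For $i\in\mathcal{A}(u(\fixedt))$ we have $p(\fixedt)_i = \sgn u(\fixedt)_i \in\{\pm 1\}$ by the subdifferential characterization in Lemma \ref{notation:lem_basic_facts}, and for $i\in\mathcal{E}\setminus\mathcal{A}(u(\fixedt))$ we have $|p(\fixedt)_i|=1$ by the definition of the equicorrelation set. Thus $p(\fixedt)|_{\mathcal{E}} \in \{\pm 1\}^{|\mathcal{E}|}$. The constraints $d_{\mathcal{E}^C}=0$ and $d_i\, p(\fixedt)_i \geq 0$ for $i\in\mathcal{E}\setminus\mathcal{A}(u(\fixedt))$ likewise only depend on $\mathcal{E}$, $\mathcal{A}(u(\fixedt))$, and the signs $p(\fixedt)_i$ for $i\in\mathcal{E}\setminus\mathcal{A}(u(\fixedt))$.

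Combining these two observations, the entire optimization problem---objective and feasible set---is completely determined by the triple $(\mathcal{E}, \mathcal{A}(u(\fixedt)), p(\fixedt)|_{\mathcal{E}})$. Since $\mathcal{E}$ and $\mathcal{A}(u(\fixedt))$ are subsets of $[N]$ and the sign pattern lies in $\{\pm 1\}^{|\mathcal{E}|}$, there are at most $\sum_{\mathcal{E}\subseteq[N]} 2^{|\mathcal{E}|}\cdot 2^{|\mathcal{E}|} \leq 8^N$ such triples, hence only finitely many. The argmin set $\Directionsft$ is a function of this triple alone, so the corollary follows.

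\textbf{Expected obstacles.} There is no real obstacle; the only subtlety is recognizing that the continuous parameter $\tfrac{1}{\fixedt}r(\fixedt)$ can be absorbed into $p(\fixedt)$ via the identity $A^T r(\fixedt)=\fixedt p(\fixedt)$, after which everything reduces to combinatorial bookkeeping.
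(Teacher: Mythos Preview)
Your proposal is correct and follows essentially the same approach as the paper: both arguments show that the set $\Directionsft$ depends only on the finite triple $(\mathcal{E},\mathcal{A},p(\fixedt)_\mathcal{E})$ with $p(\fixedt)_\mathcal{E}\in\{\pm 1\}^{|\mathcal{E}|}$. The paper reaches this conclusion by inspecting the KKT conditions \eqref{directions:eq_KKT_mult}--\eqref{directions:eq_KKT_ineq_4} and noting that $\theta_{\mathcal{E}^C}$ absorbs the remaining components of $p(\fixedt)$, whereas you obtain it by expanding the objective and using $A^T r(\fixedt)=\fixedt\,p(\fixedt)$ directly; these are two equivalent ways of eliminating the continuous dependence on $\tfrac{1}{\fixedt}r(\fixedt)$.
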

\begin{proof}
The corollary essentially follows from the KKT conditions \eqref{directions:eq_KKT_mult}-\eqref{directions:eq_KKT_ineq_4} in the proof of Theorem \ref{directions:thm_characterization}. Recall that \( d \in \Directionsft \) is a possible direction if and only if there exist \( (\lambda,\theta) \in \mathbb{R}^N \times \mathbb{R}^N \) such that \eqref{directions:eq_KKT_mult}-\eqref{directions:eq_KKT_ineq_4} are satisfied. Since \( \theta_{\mathcal{E}^C} \) can be chosen freely, the KKT conditions depend only on \( \mathcal{E},\mathcal{A}, ~ \text{and} ~ p(\fixedt)_{\mathcal{E}}\in \{ \pm 1 \}^{ |\mathcal{E}|} \). Therefore, the set \Directionsft~ depends only on  \( \mathcal{E},\mathcal{A}, \text{ and}~ p(\fixedt)_{\mathcal{E}} \), which attain only finitely many different values.
\end{proof}

\section{The Generalized Homotopy Method}\label{section:hom_alg}

The characterization of the set of possible directions \( \Directionsft \) in Theorem \ref{directions:thm_characterization} directly yields a meta approach to compute a solution path: Start by choosing \( t^0 \) large enough to ensure that \( u(t^0)= 0 \) is a solution, i.e., 
\( t^0= \| A^T f \|_\infty \).
Compute a direction \( d^1 \in \Directions (t^0,u(t^0) ) \) and continue along the path \( t\mapsto \left( t, u(t^0)+(t^0-t) d^1 \right) \in \mathbb{R} \times \mathbb{R}^N \) as long as \( u(t) \in U_t \). Then, compute a new direction and repeat. \\
In the case of non-uniqueness, this approach yields a family of algorithms, as it needs to be combined with a rule \( R \) to choose a specific \( d \) from a given set \( \Directions \) of potential directions, i.e., \( d= R(\Directions) \in \Directions \). The proof of the finite termination property \cite{Osborne2000,Efron2004,Tibshirani2013} only holds for some and not for all of these algorithms as illustrated in Proposition  \ref{hom_alg:prop_infinite_kinks} below. Thus for certain choice rules, the meta approach does not necessarily terminate after finitely many steps.  

\begin{proposition}\label{hom_alg:prop_infinite_kinks}
There exists a choice rule \( R \), which, combined with the meta approach outlined above, yields a piecewise linear and continuous solution path \( {t \mapsto u(t)} \) with infinitely many kinks for certain \( A \in \mathbb{R}^{m \times N} \) and \( f \in \mathbb{R}^{m} \).
\end{proposition}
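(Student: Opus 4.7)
My plan is to prove Proposition \ref{hom_alg:prop_infinite_kinks} by exhibiting an explicit triple $(A,f,R)$. The argument breaks into three steps.

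First, I would locate a kink at which the meta-approach genuinely has a choice. By Theorem \ref{directions:thm_characterization}, $\Directionsft$ is multi-valued precisely when the nonnegative least squares problem \eqref{directions:eq_characterization} admits multiple minimizers, which in turn requires $A_{\mathcal{E}(\hat{t})}$ to be column rank-deficient and $\mathcal{E}(\hat{t})\supsetneq\mathcal{A}(u(\hat{t}))$. A natural candidate is an $A$ with several linearly dependent columns and an $f$ such that at some kink the equicorrelation set strictly contains the active set. Mass can then be redistributed among the dependent columns without changing $Au$ or $p$, producing a nontrivial continuum of admissible directions.

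Second, I would construct $R$ adversarially. Inspection of the KKT system \eqref{directions:eq_KKT_mult}--\eqref{directions:eq_KKT_ineq_4} shows that, whenever $d_i=0$ for some $i\in\mathcal{E}(\hat{t})\setminus\mathcal{A}(u(\hat{t}))$, the multiplier $\lambda_i$ is generically nonzero, so that $s_{\mathcal{E}\setminus\mathcal{A}}>0$ in Remark \ref{directions:rem_explicit_t} and hence $\delta<\hat{t}$. The rule $R$ always selects such a $d\in\Directionsft$; after each step the meta-approach therefore arrives at a strictly new kink before reaching $t=0$.

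Third, I would verify that this adversarial choice can be iterated indefinitely. The cleanest route is to build $(A,f)$ with a self-similar block structure: the columns of $A$ are grouped into blocks and $f$ is chosen so that following a bad direction rescales the residual $r(\hat{t}-\delta)$ to a copy of $r(\hat{t})$ and returns the algorithm, up to relabeling, to an equivalent configuration. The resulting sequence of kinks $\{\hat{t}_k\}$ is then infinite, and choosing the scaling factor so that $\sum_k(\hat{t}_k-\hat{t}_{k+1})<\hat{t}_0$ places the accumulation at some $t^\ast\geq 0$, yielding the desired piecewise linear and continuous path with infinitely many kinks.

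I expect the main obstacle to be this last step, the explicit construction of the self-similar example. A softer fallback is to invoke Corollary \ref{directions:cor_finitely_many}: only finitely many direction sets appear along any path, so an infinite sequence of kinks must revisit some configuration infinitely often, and $R$ can then be defined configuration-by-configuration to force the pathology to recur each time.
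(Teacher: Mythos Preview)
Your overall strategy---an explicit $(A,f)$ with dependent columns so that $\Directionsft$ is multi-valued, together with an adversarial rule $R$---is exactly the paper's. But Step~2 rests on a misconception, and the fallback is circular. In Step~2 you want to force a premature kink via $s_{\mathcal{E}\setminus\mathcal{A}}$ by \emph{choosing} $d$ with $d_i=0$ for some $i\in\mathcal{E}(\hat t)\setminus\mathcal{A}(u(\hat t))$ and $\lambda_i\neq 0$. However, $\lambda_i$ does not depend on which $d\in\Directionsft$ you pick: all $d\in\Directionsft$ share the same image $Ad$ (the NNLS objective is strictly convex in $Ad$), hence the same $(A^TAd)_i$, hence the same $\lambda_i=(p(\hat t)-A^TAd)_i$. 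If $\lambda_i\neq 0$, complementary slackness \eqref{directions:eq_KKT_ineq_4} forces $d_i=0$ for \emph{every} $d\in\Directionsft$, so $s_{\mathcal{E}\setminus\mathcal{A}}$ is identical for all choices and $R$ has no leverage there; on the coordinates where $R$ \emph{does} have freedom one necessarily has $\lambda_i=0$, and $s_{\mathcal{E}\setminus\mathcal{A}}$ contributes nothing. The paper's mechanism is instead via $s_{\mathcal{A}}$: pick a direction that drives an \emph{active} component to zero. Concretely, with $A=\bigl[\begin{smallmatrix}1&1&1&0\\0&0&0&1\end{smallmatrix}\bigr]$ and $f=(2,1)^T$, every minimizer for $t\le 1$ satisfies $u_1+u_2+u_3=2-t$, $u_j\ge 0$, $u_4=1-t$; alternating between $d=(\tfrac32,-1,\tfrac12,1)^T$ and $d=(\tfrac32,\tfrac12,-1,1)^T$ drives $u_2$ and $u_3$ alternately to zero and produces kinks at $t=2^{-k}$.

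Your fallback via Corollary~\ref{directions:cor_finitely_many} is circular: finiteness of the configuration set only says that \emph{if} an infinite sequence of kinks exists then some configuration recurs; it does not produce such a sequence. You still have to exhibit a configuration together with a choice of direction that, after finitely many steps, returns to the same configuration at a strictly smaller parameter---which is precisely the explicit construction you deferred in Step~3.
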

\begin{proof}
Let 
\begin{equation}\label{hom_alg:eq_infinite_kinks_matrix}
A=\begin{bmatrix} 1 & 1 & 1 & 0 \\ 0 & 0 & 0 & 1 \end{bmatrix}\in \mathbb{R}^{2\times 4} ~~\text{and}~~
  f= \begin{bmatrix} 2 \\ 1 \end{bmatrix} \in \mathbb{R}^2 ~.
\end{equation}
 Then \( u(t)= (u_1(t),u_2(t),u_3(t),u_4(t))^T \in U_t \) if and only if 
\begin{equation}\label{alg_hom:prop_proof_1}
\begin{cases}\begin{tabular}{llll}
 \( u_1=u_2=u_3= 0 \), & & \( u_4=0 \) & if \( t \geq 2 \) \\
 \( u_1,u_2,u_3\geq 0 \), & \( u_1+u_2+u_3=2-t \),&   \( u_4=0 \) & if \( t \in (1,2) \) \\
  \( u_1,u_2,u_3\geq 0 \), & \( u_1+u_2+u_3=2-t \),&   \( u_4=1-t \) & if \( t \in [0,1] \) ~.\\
\end{tabular}
\end{cases}
\end{equation}
\begin{figure}
\begin{subfigure}{0.49\textwidth}
\centering{
\includegraphics[width=\textwidth,keepaspectratio=true]{./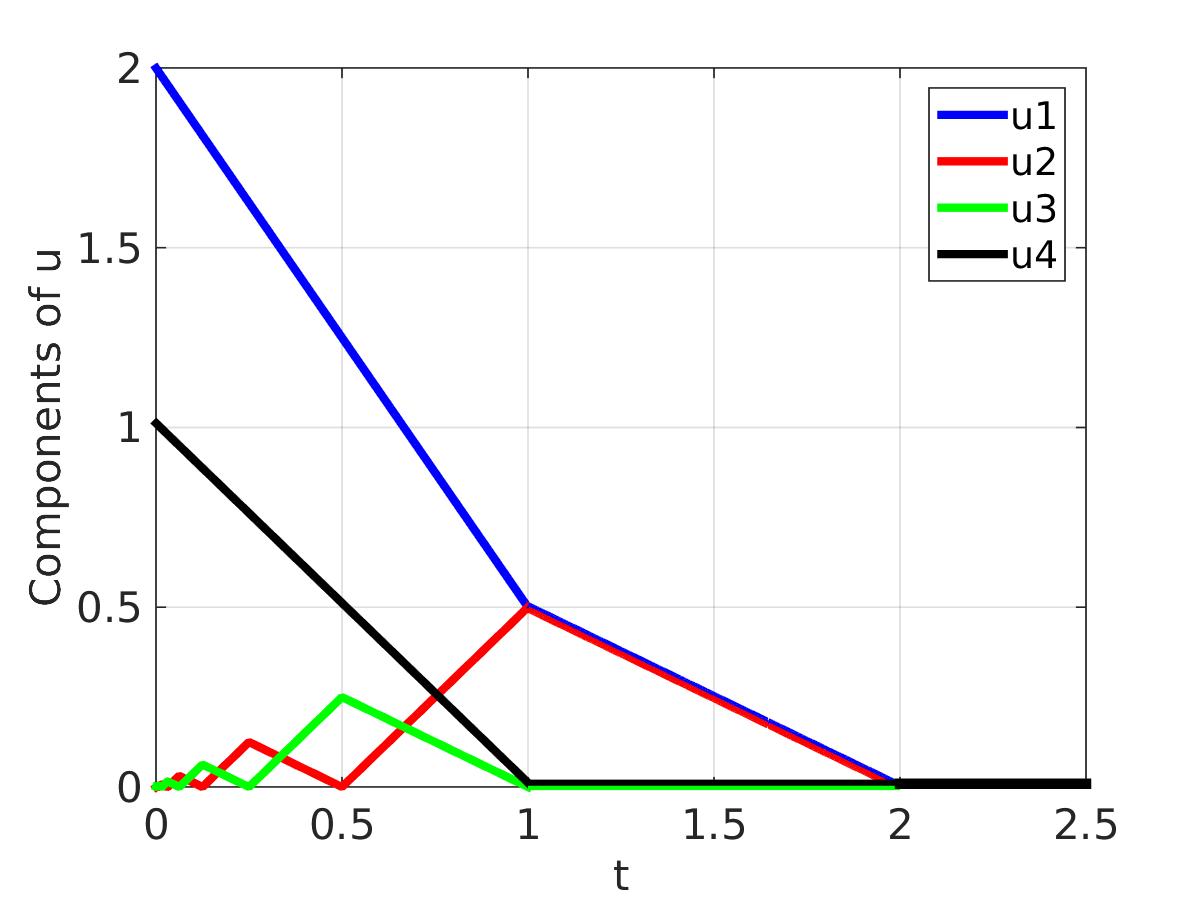}
\subcaption{Components of \( u(t) \)}}
\end{subfigure}
\begin{subfigure}{0.49\textwidth}
\centering{
\includegraphics[width=\textwidth,keepaspectratio=true]{./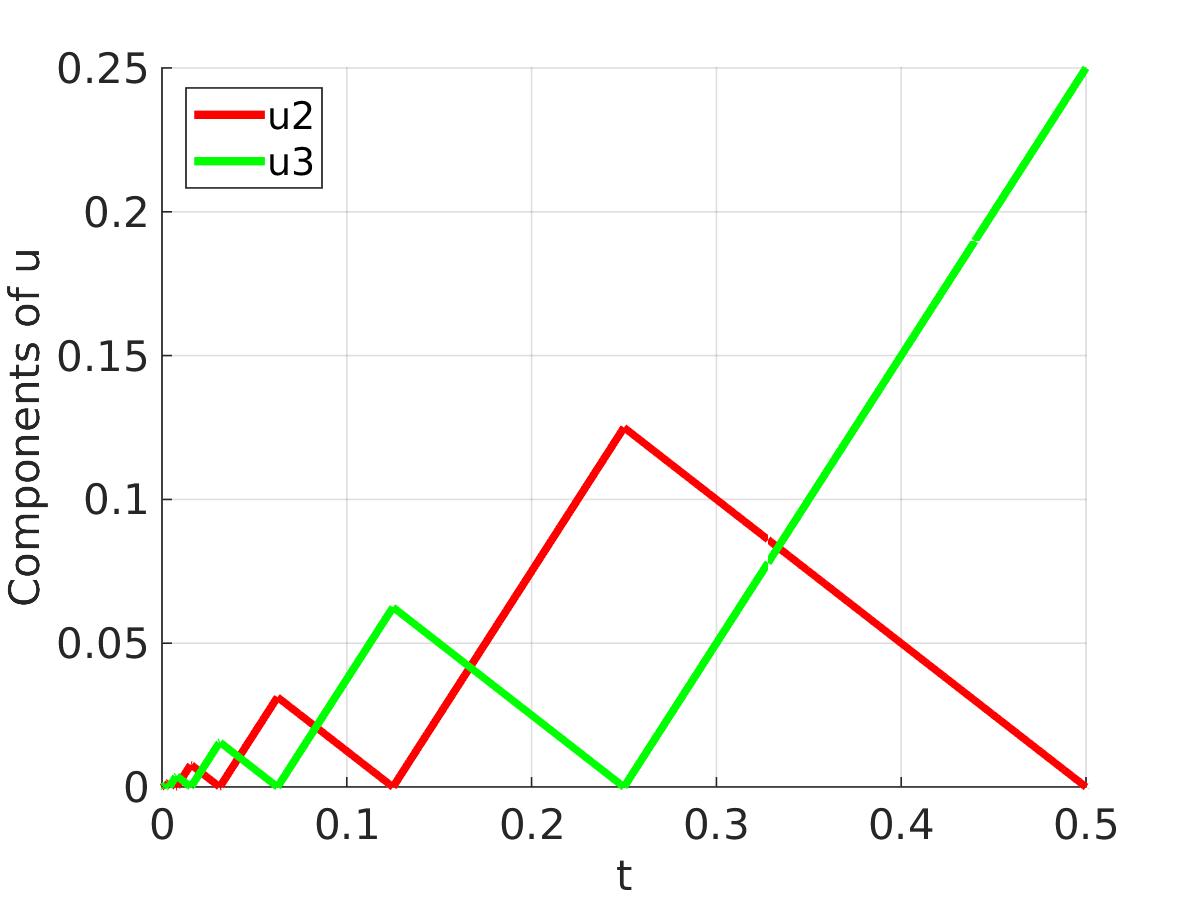}
\subcaption{\( u_2(t) \) and \( u_3(t) \) on \( [0,1/2] \)}}
\end{subfigure}
 \vspace{2ex}\\ 
 \caption{For \( A \) and \( f \) as in \eqref{hom_alg:eq_infinite_kinks_matrix}, we display a piecewise linear and continuous solution path with infinitely many kinks.}
 \label{alg_hom:fig_oscillating_path}
\end{figure}

Thus already at the fist kink \( t=2 \) there are multiple permissible directions, and it is not a priori clear which of them to choose. The choice \( d = \begin{bmatrix} \half & \half & 0 & 0 \end{bmatrix}^T \) is permissible, yielding \( u_1(t)=u_2(t)= \frac{2-t}{2} \), 
\( u_3(t)= u_4(t) = 0 \).  At \( t=1 \), a change of direction is necessary to prevent that \( u_4 \) violates \eqref{alg_hom:prop_proof_1}. A new permissible direction is \( d= \begin{bmatrix} \frac{3}{2} & -1 & \half & 1 \end{bmatrix}^T \). Now \( u_2(t) \) decreases and hits \( 0 \) at \( t=2 \), so again a change of direction is required; 
\( d= \begin{bmatrix} \frac{3}{2} & \half & -1 & 1 \end{bmatrix}^T \) is permissible. Continuing in this fashion and alternating between \( d= \begin{bmatrix} \frac{3}{2} & -1 & \half & 1 \end{bmatrix}^T \) and \( d= \begin{bmatrix} \frac{3}{2} & \half & -1 & 1 \end{bmatrix}^T \), one obtains kinks at  \( t=2^{-k} \) for every \( k \in \mathbb{N} \). It is easy to check that the three different directions chosen really correspond to different sets \( \Directions \), so we are following a choice rule \( R \). \\
The resulting solution path is displayed in Figure \ref{alg_hom:fig_oscillating_path}.
\end{proof}

To the best of our knowledge, this phenomenon was not discussed in any previous work dealing with the Lasso, nor have any specific choice rules been studied which avoid it. We propose to always choose the direction \( d^{j+1} \in \Directions(t^j,u(t^j)) \) with minimal \( \ell_2\)-norm, which yields the generalized homotopy method (Algorithm \ref{alg_hom:algorithm}).
This approach is computationally feasible. For example, by first computing any \( \tilde{d} \in \Directions(t^j,u(t^j)) \), it can be formulated as
\begin{equation}\label{alg_hom:eq_computation_minimal_l2_direction}
\begin{aligned}
d^{j+1} \in \argmin_{d\in \mathbb{R}^N} \| d \|_2^2 \quad \text{s.t.} ~ &Ad = A \tilde{d},~ d_{\Ej^C} = 0, \\
 &d_i p(t^j)_i \geq 0 ~ \forall i \in \Ej \backslash \Aj ~.
\end{aligned}
\end{equation} 
In the most common scenarios (see Lemmas \ref{alg_hom_other:lem_our_leaving}, \ref{alg_hom_other:lem_our_subsign} and \ref{alg_hom_other:lem_our_hitting}), the computation of \( d^{j+1} \) is simpler than \eqref{alg_hom:eq_computation_minimal_l2_direction}.

\begin{algorithm}
\caption{Generalized Homotopy Method}\label{alg_hom:algorithm}
\begin{algorithmic}[1]
\State {\textbf{Input:} data \( f \in \mathbb{R}^m\), matrix \( A\in \mathbb{R}^{m\times N} \) } 
\State {\textbf{Output:} number of steps \( K \), sequence \( t^0,\hdots,t^K \) of regularization parameters, sequence \( u(t^0),\hdots,u(t^K) \) of solutions}
\State {\textbf{Initialization:} Set \( t^0=\| A^T f \|_\infty~\text{and}~ u(t^0)=0 \).}
\For { \( j=0,1,\hdots \)} 
\If { \( t^j= 0 \) }
\State { Break }
\EndIf
\State { Compute \( r^j= f - A u(t^j),~ \mathcal{E}=\Ej,\text{ and } \mathcal{A}=\mathcal{A}(u(t^j)) \).}
\State { Set \begin{equation*}
\Directions= \argmin_d \| A d - \frac{1}{t^j} r^j \|_2^2 \quad \text{s.t.}~ d_{\mathcal{E}^C}=0, ~ d_i ~ p(t^j)_i\geq 0 ~ \forall i \in \mathcal{E}\backslash \mathcal{A}~,
\end{equation*}
\indent and compute 
\(
d^{j+1}=\argmin_{d\in \Directions} \| d \|_2^2 ~.
\)
}
\State { Using Remark \ref{directions:rem_explicit_t}, find the minimal \( t^{j+1} \geq 0 \) s.t. \( u(t)\in U_t \) for all \( t\in  [t^{j+1},t^j] \).}
\EndFor
\end{algorithmic}
\end{algorithm}

Let \( t^0=\| A^T f \|_\infty > t^1 > \hdots > t^K = 0  \) and \( u(t^0),u(t^1),\hdots, u(t^K) \) be the outputs of Algorithm \ref{alg_hom:algorithm}. The path \( u \colon \mathbb{R}_{\geq 0 } \rightarrow \mathbb{R}^N, ~ t \mapsto u(t) \) is then defined via linear interpolation by
\begin{equation}\label{alg_hom:eq_def_u}
u(t)= 
\begin{cases}
\begin{tabular}{ll}
\( 0 \) &if  \( t\geq t^0 \)\\
\( \frac{t-t^k}{t^{k-1}-t^k} u(t^{k-1})+ \frac{t^{k-1}-t}{t^{k-1}-t^k} u(t^k) \) & if \( t\in [t^k,t^{k-1}) \)
\end{tabular}
\end{cases}~.
\end{equation}
The following theorem, the main result of this paper, shows that \( u(t) \) is indeed a solution path.
\nopagebreak{
\begin{theorem}\label{alg_hom:thm_main}
The generalized homotopy method (Algorithm \ref{alg_hom:algorithm}) terminates after finitely many iterations. Furthermore,  \( u\colon \mathbb{R}_{\geq 0 } \rightarrow \mathbb{R}^N \) as in \eqref{alg_hom:eq_def_u} is piecewise linear, continuous, and satisfies
\begin{equation*}
 u(t)\in U_t =\argmin_{u\in \mathbb{R}^N} \half \| Au- f\|_2^2 + t ~\|u \|_1 
 \end{equation*}
 for all \( t > 0 \) as well as \( u(0) \in U_0 \).
\end{theorem}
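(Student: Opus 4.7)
My plan is to prove the three parts of the theorem---continuity and piecewise linearity of \( u \), the inclusion \( u(t)\in U_t \) for all \( t\geq 0 \), and finite termination---in that order. The first two parts should follow from a straightforward induction across the iterates of the algorithm; I expect finite termination to be the main obstacle.

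For the correctness-and-continuity part, I would proceed as follows. On \( [t^0,\infty) \) with \( t^0=\|A^Tf\|_\infty \), the initialisation \( u=0 \) satisfies the subgradient optimality condition \( 0\in -A^Tf+t\,[-1,1]^N \) because \( \|A^Tf\|_\infty\leq t \); this handles the region before the first kink. For the inductive step, the direction \( d^{j+1} \) computed by the algorithm lies in \( \Directions(t^j,u(t^j)) \) by construction, so Theorem \ref{directions:thm_characterization} gives a nontrivial interval \( [t^j-\delta,t^j] \) on which the linear extension remains in \( U_t \); Remark \ref{directions:rem_explicit_t} gives the exact maximum \( \delta \), which is precisely what the algorithm uses to set \( t^{j+1} \). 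Strict decrease \( t^{j+1}<t^j \) follows from inspecting the three summands in Remark \ref{directions:rem_explicit_t}: the middle one satisfies \( s_{\mathcal{E}\setminus\mathcal{A}}<t^j \) because \( \frac{|\lambda_i|}{|\lambda_i|+2}<1 \), and the other two are strictly bounded above by \( t^j \) by the strict-inequality constraints built into their definitions. Continuity at kinks is automatic from \eqref{alg_hom:eq_def_u}, and once termination is established \( u(0)\in U_0 \) follows by passing to the limit \( t\to 0^+ \) in the variational inequality \( E_t(u(t))\leq E_t(v) \) and using that any \( v\in U_0 \) satisfies \( Av=AA^\dagger f \).

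The main obstacle is finite termination, which I would attack by contradiction. Assume Algorithm \ref{alg_hom:algorithm} produces an infinite sequence \( t^0>t^1>\cdots \). By Corollary \ref{directions:cor_finitely_many}, and more precisely by the observation in its proof that \( \Directions \) depends only on the triple \( (\mathcal{E}^j,\mathcal{A}^j,p(t^j)|_{\mathcal{E}^j}) \), only finitely many direction sets can occur. The minimum \( \ell_2 \)-norm element of a nonempty polyhedron being unique, \( d^{j+1} \) is itself a deterministic function of that triple, so only finitely many (triple, direction) pairs can arise, and by pigeonhole some pair recurs at two steps \( j<k \). To close the argument I plan to track the residual energy \( \frac{1}{2}\|Au(t)-f\|_2^2 \). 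A short KKT computation combining \eqref{directions:eq_KKT_mult}, \eqref{directions:eq_KKT_eq_2}, and \eqref{directions:eq_KKT_ineq_4} yields \( \langle p(t^j),d^{j+1}\rangle=\|Ad^{j+1}\|_2^2 \), which together with \( A^T(Au(t)-f)=-t\,p(t) \) gives \( \frac{d}{dt}\frac{1}{2}\|Au(t)-f\|_2^2=t\|Ad^{j+1}\|_2^2\geq 0 \); by Lemma \ref{notation:lem_basic_facts}\ref{notation:enum_image_equality}, \( Au(t) \) is uniquely determined by \( t \), so this monotonicity is strict on any piece with \( Ad^{j+1}\neq 0 \). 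The hardest step is converting this monotonicity together with the recurrence of \( (s,d) \) into an outright contradiction; my best candidate is to combine the step-size formulas of Remark \ref{directions:rem_explicit_t} with the minimum \( \ell_2 \)-norm property (which should pin down \( u(t^j) \) as a deterministic function of \( t^j \) and the triple) to show that the ratios \( t^{j+1}/t^j \) along a recurring cycle are state-determined constants, so that cycling would force \( t^j \) to contract geometrically by a common factor strictly less than \( 1 \) per cycle and therefore to reach \( 0 \) in finitely many cycles, contradicting non-termination.
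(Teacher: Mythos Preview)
Your treatment of correctness, continuity, and strict decrease \( t^{j+1}<t^j \) is fine and matches the paper's reasoning. The issue is the finite termination argument.

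Your final step is both unjustified and, in fact, self-defeating. First, the claim that the ratio \( t^{j+1}/t^j \) is determined by the triple \( (\mathcal{E}^j,\mathcal{A}^j,p(t^j)|_{\mathcal{E}^j}) \) is not substantiated: in the step-size formulas of Remark \ref{directions:rem_explicit_t} the term \( s_{\mathcal{A}} \) depends on the actual values \( u(t^j)_i \) (which are not determined by the triple when the minimiser is non-unique), and \( s_{\mathcal{E}^C} \) depends on \( p(t^j)_i \) for \( i\in\mathcal{E}^C \), which is likewise not encoded in the triple. The parenthetical hope that the minimum \( \ell_2 \)-norm rule ``should pin down \( u(t^j) \) as a deterministic function of \( t^j \) and the triple'' is unfounded: that rule selects \( d^{j+1} \) uniquely from \( \Directions \), but says nothing about where the current iterate sits inside \( U_{t^j} \). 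Second, and more seriously, even if you could establish geometric contraction \( t^{j+1}=r\,t^j \) with a fixed \( r<1 \) along a recurring cycle, this is \emph{not} a contradiction to non-termination: it gives \( t^j=r^j t^0>0 \) for all \( j \), which is precisely the behaviour exhibited in Proposition \ref{hom_alg:prop_infinite_kinks}. So the route you sketch would at best reproduce the counterexample rather than rule it out.

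The paper's argument avoids this trap by exploiting a structural lemma you did not invoke, Lemma \ref{alg_hom:lem_set_valued_linearity}: the parameter axis decomposes into finitely many intervals \( I_{\mathcal{E},s} \), and on each such interval linear interpolation of solutions is again a solution. Once all \( t^j \) with \( j\geq j_0 \) fall into a single \( I_{\mathcal{E},s} \), the straight segment from \( u(t^j) \) to \( u(t^{j+2}) \) lies in the solution sets, so its direction \( \bar d \) belongs to \( \Directions(t^j,u(t^j)) \) and is a genuine convex combination of \( d^{j+1} \) and \( d^{j+2} \). The minimum \( \ell_2 \)-norm choice then forces \( \|d^{j+1}\|_2<\|d^{j+2}\|_2 \), a strictly increasing sequence drawn from the finite set of minimum-norm directions furnished by Corollary \ref{directions:cor_finitely_many} --- an immediate contradiction. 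This is where the minimum-norm rule actually does work, and it is the ingredient your proposal is missing.
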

}~\par

To prove Theorem \ref{alg_hom:thm_main}, we need the following lemma, which describes the dependence of the solution sets \( U_t \) on \( t \).
\begin{lemma}\label{alg_hom:lem_set_valued_linearity}
Let \( p(t) \) be the subgradient as in  Lemma \ref{notation:lem_basic_facts}. For every \( \mathcal{E}\subseteq [N] \) and every \( s \in \{ \pm 1 \}^{|\mathcal{E}|} \) the set \begin{equation*}
I_{\mathcal{E},s} = \{ 0 <  t \leq \| A^T  f \|_{\infty} \colon \mathcal{E}= \argmax_{i\in [N]} |p(t)_i |,~ p(t)_\mathcal{E}= s \}
\end{equation*}
is an interval. If \(a,b \) with \( a<b \) lie in the same  \( I_{\mathcal{E},s} \), and \(u(a)\in U_a \) as well as \( u(b)\in U_b \), then for every \( t \in [a,b] \) the linear interpolation
\begin{equation*}
u(t)=\frac{b-t}{b-a} u(a) + \frac{t-a}{b-a} u(b)
\end{equation*}
satisfies \( u(t) \in U_t \).
\end{lemma}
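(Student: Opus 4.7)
The plan is to prove the second claim (the linear interpolation produces valid minimizers) first, and then deduce the interval property as a simple consequence.

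For the interpolation claim, I would verify the optimality condition $\frac{1}{t}A^{T}(f-Au(t)) \in \partial \|u(t)\|_1$ directly. Since $u(t)$ is an affine combination of $u(a)$ and $u(b)$, linearity gives
\begin{equation*}
A^{T}(f-Au(t)) = \tfrac{b-t}{b-a}\,A^{T}(f-Au(a)) + \tfrac{t-a}{b-a}\,A^{T}(f-Au(b)) = \tfrac{b-t}{b-a}\,a\,p(a) + \tfrac{t-a}{b-a}\,b\,p(b).
\end{equation*}
On $\mathcal{E}$, since $p(a)_\mathcal{E}=p(b)_\mathcal{E}=s$, the bracket collapses to $\left[\tfrac{(b-t)a + (t-a)b}{b-a}\right]s = t\,s$, so $p(t)_\mathcal{E}=s$. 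Off $\mathcal{E}$, both $|p(a)_i|$ and $|p(b)_i|$ are strictly less than $1$, and the triangle inequality together with the identity $\tfrac{b-t}{b-a}a + \tfrac{t-a}{b-a}b = t$ yields $|p(t)_i| < 1$.

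Next I need to verify the sign condition on the support. By Lemma \ref{notation:lem_basic_facts}\ref{notation:enum_act_equi}, $\mathcal{A}(u(a)), \mathcal{A}(u(b)) \subseteq \mathcal{E}$, and on their supports $u(a)_i, u(b)_i$ share the sign $s_i$. Therefore the convex combination $u(t)_i$ is either zero or still has sign $s_i = p(t)_i$, so the subgradient inclusion holds componentwise on $\mathcal{E}$. Off $\mathcal{E}$ we have $u(a)_i=u(b)_i=0$, hence $u(t)_i=0$, and we already showed $|p(t)_i|<1$. Combined with the optimality condition $0 \in A^T(Au(t)-f) + t\,\partial\|u(t)\|_1$, this gives $u(t) \in U_t$.

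For the interval claim, let $a<b$ both lie in $I_{\mathcal{E},s}$ and pick $c \in (a,b)$. By the interpolation result, $u(c) \in U_c$ exists, and the computation above shows $p(c)_\mathcal{E}=s$ and $|p(c)_i|<1$ for $i \notin \mathcal{E}$. Hence $\mathcal{E} = \argmax_{i \in [N]} |p(c)_i|$ and $p(c)_\mathcal{E}=s$, so $c \in I_{\mathcal{E},s}$. Since $c \in (0, \|A^T f\|_\infty]$ is automatic, $I_{\mathcal{E},s}$ is an interval.

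The only subtlety I anticipate is keeping the bookkeeping straight between points where $u(t)_i = 0$ and points where it inherits a definite sign, since the subdifferential of $|\cdot|$ is a singleton in one case and an interval in the other; the identity $\frac{(b-t)a+(t-a)b}{b-a}=t$ does all the real work.
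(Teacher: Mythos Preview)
Your proof is correct and follows essentially the same route as the paper: both express $\tfrac{1}{t}A^{T}(f-Au(t))$ as a convex combination of $p(a)$ and $p(b)$, deduce that it equals $s$ on $\mathcal{E}$ and has modulus strictly below $1$ off $\mathcal{E}$, and then obtain the interval property from the interpolation result. The only cosmetic difference is that the paper verifies $\tilde{p}(t)\in\partial\|u(t)\|_{1}$ via the inequality $\langle \tilde{p}(t),u(t)\rangle\geq\|u(t)\|_{1}$, whereas you carry out the equivalent componentwise sign check.
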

\begin{proof}
Fix \( \mathcal{E}\subseteq [N], s \in \{ \pm 1 \}^{|\mathcal{E}|} \), \( a,b\in I_{\mathcal{E},s} \), and \( u(a) \in U_a \) as well as \( u(b) \in U_b \). 
Let \( p(a) \) and \( p(b) \) be the subgradients at \( u(a) \) and \( u(b) \) as defined in  \eqref{notation:eq_the_subgradient}. Then in order to prove that \( u(t) \in U_t \), we have to show that 
\begin{equation*}
 \partial \| u(t) \|_1 \ni \tilde{p}(t):= \frac{1}{t} A^T( f- Au(t) ) = \frac{b-t}{b-a} \frac{a}{t} p(a) + \frac{t-a}{b-a} \frac{b}{t} p(b)~.
\end{equation*}
The last equality shows that \( \tilde{p}(t) \) is a convex combination of \( p(a) \) and \( p(b) \) for every \( t \in [a,b] \). Thus \( \| \tilde{p}(t)_{\mathcal{E}^C} \|_\infty < 1 \) and \( \tilde{p}(t)_\mathcal{E}= p(a)_\mathcal{E}
=p(b)_\mathcal{E}=s \). In particular, \( { \| \tilde{p}(t) \|_\infty \leq 1 } \). To show that \( \tilde{p}(t) \in \partial \| u(t) \|_1 \), it hence suffices to prove that \( \| u(t) \|_1 \leq \langle \tilde{p}(t),u(t) \rangle \). Since \( \mathcal{A}(u(a))\cup \mathcal{A}(u(b))\subseteq \mathcal{E} \), we have that
\begin{align*}
\langle \tilde{p}(t),u(t) \rangle &=  \frac{b-t}{b-a} \langle \tilde{p}(t),u(a) \rangle + 
						\frac{t-a}{b-a} \langle \tilde{p}(t),u(b) \rangle \\
					&= \frac{b-t}{b-a} \langle \tilde{p}(t)_\mathcal{E},u(a)_\mathcal{E} \rangle + 
						\frac{t-a}{b-a} \langle \tilde{p}(t)_\mathcal{E},u(b)_\mathcal{E} \rangle \\
					&= \frac{b-t}{b-a} \langle p(a)_\mathcal{E},u(a)_\mathcal{E} \rangle + 
						\frac{t-a}{b-a} \langle p(b)_\mathcal{E},u(b)_\mathcal{E} \rangle \\
					&= \frac{b-t}{b-a} \| u(a) \|_1 + \frac{t-a}{b-a} \| u(b) \|_1 \\
					&\geq \| u(t) \|_1 ~.
\end{align*}
This concludes the proof of the second statement. In particular, \( \tilde{p}(t) \) coincides with the subgradient \( p(t) \) as in \eqref{notation:eq_the_subgradient}. \\
The first statement follows from \( u(t) \in U_t \), \( \| p(t)_{\mathcal{E}^C} \|_\infty < 1 \), and \( p(t)_\mathcal{E} = s \).
\end{proof}

\begin{proof}[Proof of Theorem \ref{alg_hom:thm_main}]
Recalling Theorem \ref{directions:thm_characterization}, \( d^{j+1} \) is indeed a feasible direction at \( (t^j,u(t^j)) \) provided that \( u(t^j)\in U_{t^j} \). 
To see this, observe that 
\begin{equation*}
\{ t \in [0,t^{j-1}]\colon u(t)=u(t^{j-1})+(t^{j-1}-t) d^{j} \in U_t \}
\end{equation*}
is closed.  Alternatively, one can also use explicit computation of the maximal \( \delta>0 \) in Remark \ref{directions:rem_explicit_t}.  \\

We show the finite termination property by contradiction, assuming that the algorithm does not terminate after finitely many steps.
By Lemma \ref{alg_hom:lem_set_valued_linearity} and the monotonicity of the \( t^j \), there exists a set \( \mathcal{E} \subseteq [N] \), a vector \( s \in \{ \pm 1 \}^{\mathcal{E} } \), and a \( j_0 \in \mathbb{N} \)  such that \( t^j \in I_{\mathcal{E},s} \) for all \( j \geq j_0 \). \\
We will now show that 
\begin{equation}\label{alg_hom:eq_proof_main_thm}
\| d^{j+1} \|_2 < \| d^{j+2} \|_2 \qquad \forall ~ j \geq j_0 ~.
\end{equation}
Since \( t^j,t^{j+2} \in I_{\mathcal{E},s} \), there exists, again by Lemma \ref{alg_hom:lem_set_valued_linearity}, a \( \bar{d}\in \Directions(t^j,u(t^j)) \) such that \begin{equation*}
u(t^{j+2})=u(t^j)+(t^j-t^{j+2}) \bar{d}~.
\end{equation*}
By construction, 
\begin{equation*}
u(t^{j+2})=u(t^j)+(t^j-t^{j+1})d^{j+1} + (t^{j+1}-t^{j+2}) d^{j+2}~.
\end{equation*}
It follows that \begin{equation*}
\bar{d}= \frac{t^j-t^{j+1}}{t^j-t^{j+2}} d^{j+1} + \frac{t^{j+1}-t^{j+2}}{t^j-t^{j+2}} d^{j+2}
\end{equation*}
is a convex combination of \( d^{j+1} \) and \( d^{j+2} \). If \( \| d^{j+2} \|_2 \leq \| d^{j+1} \|_2 \), we would have  \( \| \bar{d} \|_2 \leq \| d^{j+1} \|_2 \) by convexity. But, since \( d^{j+1} \) is the unique direction with minimal \( \ell_2\)-norm (by strict convexity), this yields \( \bar{d}=d^{j+1} \), which is a contradiction to \( d^{j+2} \not = d^{j+1} \). This shows \eqref{alg_hom:eq_proof_main_thm}. \\
By Corollary \ref{directions:cor_finitely_many}, there exist only finitely many sets of possible directions \( \Directions(t^j,u(t^j)) \). Since \( d^{j+1} \) is uniquely determined for each \( \Directions(t^j,u(t^j) )\), the set \( \{ d^{j+1} \colon j\in \mathbb{N} \} \) is also finite, a  contradiction to \eqref{alg_hom:eq_proof_main_thm}.
\end{proof}

\section{Relation to previous work}\label{section:relation_alg}\label{section:hom_alg_other}
In this section, we compare the generalized homotopy method with previous homotopy algorithms \cite{Osborne2000,Efron2004,Loris2008,Tibshirani2013} and the adaptive inverse scale space method \cite{Burger2012}. 

\subsection{Standard Homotopy Method}\label{subsection:standard_homotopy}
At the core of te generalized homotopy method is a nonnegative least squares prolem to locally choose the direction. In contrast, previous works on the homotopy method \cite{Osborne2000,Efron2004,Tibshirani2013}  proposed to find a direction by solving a linear system. We will refer to the resulting algorithm as the standard homotopy method (Algorithm \ref{alg_hom_other:standard_algorithm}). Note that for reasons of better comparison to Algorithm \ref{alg_hom:algorithm}, we have slightly extended the method to also accept inputs where the one-at-a-time condition (see Definition \ref{alg_hom_other:def_one_at_a_time}) does not hold. A first step towards dealing with scenarios where the one-at-a-time condition fails is the homotopy method with looping, which was, based on ideas in \cite{Efron2004}, introduced in \cite{Loris2008}, and is summarized in Algorithm \ref{alg_hom_other:algorithm_looping}.

\begin{algorithm}[h]
\caption{Standard Homotopy Method \cite{Osborne2000,Efron2004}}\label{alg_hom_other:standard_algorithm}
\begin{algorithmic}[1]
\State {\textbf{Input:} data \( f \in \mathbb{R}^m\), matrix \( A\in \mathbb{R}^{m\times N} \) } 
\State {\textbf{Output:} number of steps \( K \), sequence \( t^0,\hdots,t^K \) of regularization parameters, sequence \( u(t^0),\hdots,u(t^K) \) of solutions}
\State {\textbf{Initialization:} Set \( t^0=\| A^T f \|_\infty~\text{and}~ u(t^0)=0 \).}
\For { \( j=0,1,\hdots \)} 
\If { \( t^j= 0 \) }
\State { Break }
\EndIf
\State { Set \( \Leav^j= \mathcal{A}(u(t^{j-1}))\backslash \mathcal{A}(u(t^j)) \) and \( \StandardS^j = \Ej \backslash \Leav^j \).}
\State { Compute 
\begin{equation}\label{alg_hom_other:eq_standard_direction}
d^{j+1}_{\StandardS^j} = \left (A_{\StandardS^j}^T A_{\StandardS^j} \right)^\dagger p(t^j)_{\StandardS^j}  \quad \text{and set} \quad d^{j+1}_{\left( \StandardS^j \right)^C } = 0 ~.
\end{equation}
}

\State { Find the minimal \( t^{j+1} \geq 0 \) such that \( u(t) \) solves \eqref{directions:eq_variational_problem} on \( [t^{j+1},t^j] \).}
\If { \( t^{j+1}=t^j \) }
\State { \textbf{Error} ``Algorithm failed to produce a solution path.'' }
\EndIf
\EndFor
\end{algorithmic}
\end{algorithm}

\begin{definition}[\cite{Efron2004}]\label{alg_hom_other:def_one_at_a_time}
Let  \( t^0,\hdots, t^K \) and \( u(t^0),\hdots, u(t^K) \) be the output produced by Algorithm \ref{alg_hom_other:standard_algorithm}. An index \( i \in \Leav^j := \mathcal{A}(u(t^{j-1}))\backslash \mathcal{A}(u(t^j)) \) is called a leaving coordinate, and an index \( i \in \Hit^j:=\Ej \backslash \Ejminus \) is called a hitting coordinate. \\
We say the \emph{one-at-a-time} condition is satisfied, if for every \( 0\leq j \leq K \) such that \( t^j > 0 \) we have that
\begin{equation}\label{alg_hom_other:eq_one_at_a_time}
| \Hit^j \dot{\cup} \Leav^j |\leq 1 ~.
\end{equation}
\end{definition}

\begin{theorem}[\cite{Efron2004}]\label{alg_hom_other:thm_efron}
Assume that the one-at-a-time condition holds and that \( A_{\Ej } \) is injective at every iteration. Then, the standard homotopy algorithm computes the unique solution path in finitely many steps. 
\end{theorem}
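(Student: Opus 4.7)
The plan is to reduce this theorem to the generalized setting already developed in the paper, by showing that, under the injectivity of $A_{\mathcal{E}^j}$ and the one-at-a-time condition, Algorithm \ref{alg_hom_other:standard_algorithm} and Algorithm \ref{alg_hom:algorithm} generate the identical sequence $(t^j,u(t^j))$; finite termination then follows from Theorem \ref{alg_hom:thm_main}, and uniqueness of the solution path from Lemma \ref{notation:lem_basic_facts}\ref{notation:enum_image_equality} combined with injectivity.

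First, I would verify that the standard direction $d^{j+1}$ defined by \eqref{alg_hom_other:eq_standard_direction} lies in $\mathcal{D}(t^j,u(t^j))$ by checking the KKT conditions \eqref{directions:eq_KKT_mult}-\eqref{directions:eq_KKT_ineq_4}. Since $\mathcal{S}^j\subseteq \mathcal{E}^j$ and $A_{\mathcal{E}^j}$ is injective, the matrix $A_{\mathcal{S}^j}^T A_{\mathcal{S}^j}$ is invertible, and \eqref{alg_hom_other:eq_standard_direction} yields $A_{\mathcal{S}^j}^T A_{\mathcal{S}^j}d^{j+1}_{\mathcal{S}^j}=p(t^j)_{\mathcal{S}^j}$. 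Setting $\lambda_i:=p(t^j)_i-(A^TAd^{j+1})_i$ for $i\in \mathcal{L}^j$ (and $\lambda_i=0$ otherwise), and $\theta$ chosen to absorb the residual on $(\mathcal{E}^j)^C$, the multiplier equation \eqref{directions:eq_KKT_mult} and the equalities \eqref{directions:eq_KKT_eq_1}-\eqref{directions:eq_KKT_ineq_2} hold by construction. The sign conditions \eqref{directions:eq_KKT_ineq_1}, \eqref{directions:eq_KKT_ineq_3} and the complementarity \eqref{directions:eq_KKT_ineq_4} reduce, by the one-at-a-time assumption, to at most one hitting and one leaving index: for a leaving $i\in\mathcal{L}^j$ we have $d^{j+1}_i=0$, so \eqref{directions:eq_KKT_ineq_4} is trivial, and $\lambda_i p(t^j)_i\leq 0$ follows from the fact that $i$ has just left the active set, so the component $p(t)_i$ had to bend away from $\sgn u(t^{j-1})_i=p(t^j)_i$; for a hitting index the constraint $d^{j+1}_i p(t^j)_i\geq 0$ is forced by the same continuity argument given in Remark \ref{directions:rem_sign_constraint}, using that $|p(t)_i|$ has just reached $1$ from below.

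Second, once $d^{j+1}\in \mathcal{D}(t^j,u(t^j))$ is established, the injectivity of $A_{\mathcal{E}^j}$ implies that the objective $d\mapsto \|Ad-r^j/t^j\|_2^2$ is strictly convex on $\{d:d_{(\mathcal{E}^j)^C}=0\}$, so $\mathcal{D}(t^j,u(t^j))$ is a singleton. In particular $d^{j+1}$ is also the minimum-$\ell_2$-norm element selected by Algorithm \ref{alg_hom:algorithm}. Iterating and using induction on $j$, both algorithms produce identical $(t^j,u(t^j))$; Theorem \ref{alg_hom:thm_main} then gives finite termination and the fact that the output is a valid solution path. Finally, for uniqueness, note that on each linearity interval $[t^{j+1},t^j]$ and for each $t$ in it, Lemma \ref{notation:lem_basic_facts}\ref{notation:enum_image_equality} says that all minimizers share $Au$ and have support in $\mathcal{E}^j$; injectivity of $A_{\mathcal{E}^j}$ upgrades equality of images to equality of vectors, so $u(t)$ is the unique minimizer.

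The main obstacle is the sign verification for the hitting coordinate in the first step, since one has to argue that the direction forced by the pure linear system \eqref{alg_hom_other:eq_standard_direction} automatically respects the inequality constraints that appear in the NNLS formulation of Theorem \ref{directions:thm_characterization}. This relies on the observation that the one-at-a-time condition leaves at most one hitting index, whose sign is determined by the first-order behavior of $p(t)_i$ as it hits $\pm 1$, exactly as in Remark \ref{directions:rem_sign_constraint}. Once this is in place, the remaining pieces are bookkeeping against the KKT system derived in the proof of Theorem \ref{directions:thm_characterization}.
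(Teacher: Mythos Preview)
Your high-level plan---show that the standard and generalized algorithms coincide and then invoke Theorem~\ref{alg_hom:thm_main}---is exactly the route the paper takes (see the Remark after Theorem~\ref{alg_hom_other:thm_coincide}). The difference is the \emph{direction} in which you run the coincidence argument, and that is where your proposal has a genuine gap.

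You try to show that the standard direction \eqref{alg_hom_other:eq_standard_direction} satisfies the KKT system \eqref{directions:eq_KKT_mult}--\eqref{directions:eq_KKT_ineq_4} and hence lies in $\mathcal{D}(t^j,u(t^j))$. The problematic conditions are the sign constraints: for a hitting index you need $d^{j+1}_i\,p(t^j)_i\ge 0$, and for a leaving index you need $\lambda_i\,p(t^j)_i\le 0$. Your justification appeals to Remark~\ref{directions:rem_sign_constraint}, but that remark goes the wrong way: it shows that \emph{if} $u(t)=u(t^j)+(t^j-t)d\in U_t$ on a left neighborhood of $t^j$, \emph{then} the sign condition holds. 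You are trying to establish precisely that $d^{j+1}$ gives a valid extension, so invoking Remark~\ref{directions:rem_sign_constraint} is circular. The observation that ``$|p(t)_i|$ has just reached $1$ from below'' concerns the previous segment and the old direction $d^j$; it says nothing about the sign of the $i$-th component of the new vector $(A_{\mathcal{E}^j}^TA_{\mathcal{E}^j})^{-1}p(t^j)_{\mathcal{E}^j}$. The paper explicitly flags this sign verification as ``the most difficult part'' of Efron et al.'s original argument (see the paragraph after Lemma~\ref{alg_hom_other:lem_our_hitting}), so it cannot be dismissed by a continuity remark.

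The paper sidesteps this difficulty by reversing your argument. It starts from the generalized direction, which satisfies the sign constraints \emph{by construction} (it is the NNLS minimizer), and proves via Proposition~\ref{alg_hom:proposition_linear_system} that this direction has the form $(A_{\mathcal{S}^j}^TA_{\mathcal{S}^j})^\dagger p(t^j)_{\mathcal{S}^j}$ for some $\mathcal{A}^j\subseteq\mathcal{S}^j\subseteq\mathcal{E}^j$. Lemmas~\ref{alg_hom_other:lem_our_leaving}--\ref{alg_hom_other:lem_our_hitting} then identify $\mathcal{S}^j$ with the standard algorithm's choice $\tilde{\mathcal{S}}^j=\mathcal{E}^j\setminus\mathcal{L}^j$ by ruling out the alternative (which would force $d^{j+1}=d^j$). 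This gives the sign inequalities for free, since they were built into the generalized direction from the start. If you want to keep your forward direction, you would need to supply an independent proof of the sign consistency---essentially reproving Efron et al.'s Lemma~5.4---which is considerably more work than the paper's approach.
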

 
Our next result shows that under the same assumptions, the standard and generalized homotopy methods agree.
 
\begin{theorem}\label{alg_hom_other:thm_coincide}
Assume that the  one-at-a-time condition holds and that \( A_\Ej \) is injective at every iteration. Then the outputs of the standard homotopy method \cite{Osborne2000, Efron2004} and the generalized homotopy method  (Algorithm \ref{alg_hom:algorithm}) coincide.\\
\end{theorem}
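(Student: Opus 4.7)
The plan is to proceed by induction on the iteration index $j$, showing that the two algorithms produce the same sequences $t^0, t^1, \dots$ and $u(t^0), u(t^1), \dots$ as well as the same directions $d^1, d^2, \dots$ The base case is immediate since both algorithms initialize with $t^0 = \|A^T f\|_\infty$ and $u(t^0) = 0$. For the inductive step I assume $t^0, \dots, t^j$ and $u(t^0), \dots, u(t^j)$ agree between the two runs. Then the equicorrelation set $\mathcal{E}^j$, the active set $\mathcal{A}(u(t^j))$, the subgradient $p(t^j)$, and consequently the set $\Directions(t^j, u(t^j))$ of Theorem \ref{directions:thm_characterization} are the same for both. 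Since the next breakpoint $t^{j+1}$ is computed from $(t^j, u(t^j), d^{j+1})$ by the common rule in Remark \ref{directions:rem_explicit_t}, it suffices to show that the chosen directions agree.

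The first main step is to observe that injectivity of $A_{\mathcal{E}^j}$ forces $\Directions(t^j, u(t^j))$ to be a singleton. By Theorem \ref{directions:thm_characterization}, $\Directions(t^j, u(t^j))$ is the set of minimizers of $\|Ad - \frac{1}{t^j} r(t^j)\|_2^2$ over a convex subset of $\{d \in \mathbb{R}^N : d_{(\mathcal{E}^j)^C} = 0\}$; on that subset injectivity of $A_{\mathcal{E}^j}$ makes $d \mapsto Ad$ injective and hence the objective strictly convex in $d$. Consequently the $\ell_2$-norm minimization in Algorithm \ref{alg_hom:algorithm} is vacuous, and $d^{j+1}$ is simply the unique element of $\Directions(t^j, u(t^j))$.

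The second main step is to show that the standard direction $d^{j+1}_{\mathrm{std}}$ defined by \eqref{alg_hom_other:eq_standard_direction} itself lies in $\Directions(t^j, u(t^j))$. Here I would invoke Theorem \ref{alg_hom_other:thm_efron}, which guarantees that under the present hypotheses the standard method produces a (unique) solution path, so its linear extension $u(t^j) + (t^j - t) d^{j+1}_{\mathrm{std}}$ lies in $U_t$ on some interval $[t^{j+1}, t^j]$. By Definition \ref{directions:def_D} this is precisely the statement $d^{j+1}_{\mathrm{std}} \in \Directions(t^j, u(t^j))$, and combining with the first step yields $d^{j+1} = d^{j+1}_{\mathrm{std}}$, closing the induction.

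The main obstacle I anticipate is giving a fully self-contained justification of $d^{j+1}_{\mathrm{std}} \in \Directions(t^j, u(t^j))$, since the quick invocation of Theorem \ref{alg_hom_other:thm_efron} somewhat hides the underlying KKT content. An explicit verification of \eqref{directions:eq_KKT_mult}--\eqref{directions:eq_KKT_ineq_4} would split into two subcases under the one-at-a-time hypothesis: in a hitting-only step ($\Leav^j = \emptyset$, $\StandardS^j = \mathcal{E}^j$) the normal equation $(A^T A d^{j+1}_{\mathrm{std}})_{\mathcal{E}^j} = p(t^j)_{\mathcal{E}^j}$ makes all KKT multipliers vanish, while in a leaving-only step the delicate point is the dual inequality $\lambda_k p(t^j)_k \leq 0$ at the leaving coordinate $k$, which is equivalent to $(A^T A d^{j+1}_{\mathrm{std}})_k\, p(t^j)_k \geq 1$ and is exactly the condition ensuring $|p(t)_k| \leq 1$ just below $t^j$. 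I would use the clean citation-based argument in the main proof and flag this KKT check as the underlying reason it is legitimate.
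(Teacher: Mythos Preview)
Your argument is correct and is essentially the ``direct'' proof that the paper itself acknowledges in the Remark immediately following the theorem: once one grants Theorem~\ref{alg_hom_other:thm_efron} (Efron et al.), the standard method produces the unique solution path, so its direction at each step lies in $\Directions(t^j,u(t^j))$; your observation that injectivity of $A_{\mathcal{E}^j}$ forces $\Directions(t^j,u(t^j))$ to be a singleton then closes the induction cleanly. The paper's own argument proceeds via global uniqueness of the solution path (combining Theorems~\ref{alg_hom:thm_main} and~\ref{alg_hom_other:thm_efron}) rather than via strict convexity of the direction problem, but the logical dependence on Efron's result is the same.

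The paper, however, deliberately does \emph{not} take this route. Instead it gives a self-contained proof that avoids citing Theorem~\ref{alg_hom_other:thm_efron} altogether: Proposition~\ref{alg_hom:proposition_linear_system} shows that the generalized direction always has the pseudoinverse form $(A_{\mathcal{S}^j}^T A_{\mathcal{S}^j})^\dagger p(t^j)_{\mathcal{S}^j}$ for some $\mathcal{S}^j$ with $\mathcal{A}^j\subseteq\mathcal{S}^j\subseteq\mathcal{E}^j$, and then three case-lemmas (leaving, sign-flip, hitting) pin down $\mathcal{S}^j=\mathcal{E}^j\setminus\mathcal{L}^j$ under the one-at-a-time hypothesis. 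The payoff is that this analysis yields an independent proof of Theorem~\ref{alg_hom_other:thm_efron} as a by-product, and it isolates precisely where injectivity is used (only in the leaving case, Lemma~\ref{alg_hom_other:lem_our_leaving}). What you sketch in your final paragraph---the KKT verification split into hitting and leaving subcases---is exactly the content of those lemmas; the paper simply carries it out rather than deferring to Efron.
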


\begin{remark}
As the injectivity assumption guarantees the uniqueness of the solution path, Theorem \ref{alg_hom_other:thm_coincide} directly follows from Theorem \ref{alg_hom:thm_main} and Theorem \ref{alg_hom_other:thm_efron}. Nevertheless, we provide a self-contained proof. This also yields an alternative proof of Theorem \ref{alg_hom_other:thm_efron}.
\end{remark}

To prove Theorem \ref{alg_hom_other:thm_coincide}, we first show Proposition \ref{alg_hom:proposition_linear_system}, which states that \( d^{j+1} \) as in Algorithm \ref{alg_hom:algorithm} has the form \eqref{alg_hom_other:eq_standard_direction} for some, a-priori unknown, set \( \mathcal{S}^j \). The following three lemmas then show that the support set agrees with \( \StandardS^j \) as in Algorithm \ref{alg_hom_other:standard_algorithm}. They correspond to the three different cases in the proof of Theorem \ref{directions:thm_characterization}.

\begin{proposition}\label{alg_hom:proposition_linear_system}
Let \( 0 \leq j \leq K-1 \).  Let \( \mathcal{S}^{j} := \Aj \cup \supp(d^{j+1}) \), where \( d^{j+1} \) is as in Algorithm \ref{alg_hom:algorithm}. Then
\( \Aj \subseteq \mathcal{S}^j \subseteq \Ej \cap \mathcal{E}(t^{j+1})\),
\begin{equation*}
d^{j+1}_{\mathcal{S}^j} = (A_{\mathcal{S}^j}^T A_{\mathcal{S}^j} )^\dagger p(t^j)_{\mathcal{S}^j}~, \quad \text{and} \quad d^{j+1}_{(\mathcal{S}^j)^C} = 0 ~.
\end{equation*}
\end{proposition}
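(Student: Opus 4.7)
The plan is to combine the KKT characterization from the proof of Theorem \ref{directions:thm_characterization} with the additional minimum-$\ell_2$-norm selection rule built into Algorithm \ref{alg_hom:algorithm}. Throughout, let $d := d^{j+1}$, $\mathcal{E} := \mathcal{E}^j$, $\mathcal{A} := \mathcal{A}^j$, $\mathcal{S} := \mathcal{S}^j$, and let $\lambda, \theta \in \mathbb{R}^N$ be the Lagrange multipliers associated to $d$ by \eqref{directions:eq_KKT_mult}--\eqref{directions:eq_KKT_ineq_4}. The identity $d_{(\mathcal{S})^C} = 0$ is immediate from the definition of $\mathcal{S}$.

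First I would verify the inclusions. The relation $\mathcal{A} \subseteq \mathcal{S}$ holds by definition, and $\mathcal{S} \subseteq \mathcal{E}$ follows from $\mathcal{A} \subseteq \mathcal{E}$ (Lemma \ref{notation:lem_basic_facts}\ref{notation:enum_act_equi}) together with $\operatorname{supp}(d) \subseteq \mathcal{E}$ from \eqref{directions:eq_KKT_eq_1}. For $\mathcal{S} \subseteq \mathcal{E}(t^{j+1})$ I would invoke Cases 1 and 2 from the proof of Theorem \ref{directions:thm_characterization}: on $\mathcal{A}$ the multiplier equation and $\lambda_{(\mathcal{E}\backslash \mathcal{A})^C}=0$ give $p(t)_i = p(t^j)_i$, and on $\operatorname{supp}(d)\backslash \mathcal{A}$ complementary slackness forces $\lambda_i = 0$, so again $p(t)_i = p(t^j)_i$ is constant on $[t^{j+1}, t^j]$. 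Since $|p(t^j)_i| = 1$ for $i \in \mathcal{S}$, this constancy propagates to $t^{j+1}$, yielding $\mathcal{S} \subseteq \mathcal{E}(t^{j+1})$.

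Next I would extract the linear system. The same componentwise analysis yields $(A^T A d)_i = p(t^j)_i$ for every $i \in \mathcal{S}$: on $\mathcal{A}$ this is \eqref{directions:eq_KKT_mult} combined with $\lambda_i = \theta_i = 0$, and on $\operatorname{supp}(d)\backslash \mathcal{A}$ it is \eqref{directions:eq_KKT_mult} combined with $\theta_i = 0$ and $\lambda_i = 0$ (from complementary slackness, since $d_i \neq 0$). Because $d$ is supported in $\mathcal{S}$, this reads
\begin{equation*}
A_{\mathcal{S}}^T A_{\mathcal{S}}\, d_{\mathcal{S}} = p(t^j)_{\mathcal{S}}.
\end{equation*}

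Finally, to upgrade this to the pseudoinverse formula I would use the minimum-$\ell_2$-norm selection. Given any $h \in \ker(A_{\mathcal{S}})$, extend $h$ by zero outside $\mathcal{S}$ to obtain $\tilde h \in \mathbb{R}^N$ with $A \tilde h = 0$. For every $i \in \mathcal{S}\cap(\mathcal{E}\backslash \mathcal{A}) = \operatorname{supp}(d)\backslash \mathcal{A}$, the strict inequality $d_i p(t^j)_i > 0$ holds, so for $\varepsilon \in \mathbb{R}$ of sufficiently small modulus the perturbation $d + \varepsilon \tilde h$ still satisfies all constraints of \eqref{directions:eq_characterization} and attains the same value of $\|A\,\cdot - r(t^j)/t^j\|_2^2$; hence $d + \varepsilon \tilde h \in \Directions$. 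Minimality of $\|d\|_2$ on this open line forces $\langle d_{\mathcal{S}}, h\rangle = 0$, so $d_{\mathcal{S}} \perp \ker(A_{\mathcal{S}}) = \ker(A_{\mathcal{S}}^T A_{\mathcal{S}})$. Combined with $A_{\mathcal{S}}^T A_{\mathcal{S}}\, d_{\mathcal{S}} = p(t^j)_{\mathcal{S}}$, this identifies $d_{\mathcal{S}}$ with the minimum-norm solution, i.e., $d_{\mathcal{S}} = (A_{\mathcal{S}}^T A_{\mathcal{S}})^\dagger p(t^j)_{\mathcal{S}}$.

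The main obstacle is the minimum-norm step: one must justify that \emph{arbitrary} kernel perturbations of $A_{\mathcal{S}}$ (and not only those preserving $\operatorname{supp}(d)$) remain feasible. The resolution is exactly that the sign constraints \eqref{directions:eq_KKT_ineq_1} are active only on indices in $(\mathcal{E}\backslash \mathcal{A})\backslash \operatorname{supp}(d)$, which lie outside $\mathcal{S}$ and are therefore untouched by a perturbation supported in $\mathcal{S}$; this is why the definition $\mathcal{S} = \mathcal{A}\cup \operatorname{supp}(d)$ is the right one. The inclusion $\mathcal{S} \subseteq \mathcal{E}(t^{j+1})$ is conceptually the other delicate point, but it reduces cleanly to the constancy of $p(\cdot)_i$ on $\mathcal{S}$ already established in the proof of Theorem \ref{directions:thm_characterization}.
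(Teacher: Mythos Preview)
Your argument is correct and follows essentially the same route as the paper: both derive the normal equations $A_{\mathcal{S}}^T A_{\mathcal{S}} d_{\mathcal{S}} = p(t^j)_{\mathcal{S}}$ from the KKT conditions, and both exploit that perturbations supported in $\mathcal{S}$ remain feasible (the sign constraints being strict on $\mathcal{S}\setminus\mathcal{A}$ and absent on $\mathcal{A}$) together with $\ell_2$-minimality to pin down $d_{\mathcal{S}}$ as the pseudoinverse solution---the paper interpolates toward the target $\beta=(A_{\mathcal{S}}^T A_{\mathcal{S}})^\dagger p(t^j)_{\mathcal{S}}$, while you perturb along arbitrary kernel directions, which is an equivalent first-order variant of the same idea. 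One small omission: your constancy argument for $p(\cdot)_i$ uses the formula \eqref{directions:eq_p_rewrite}, which is only valid for $t>0$, so the inclusion $\mathcal{S}\subseteq\mathcal{E}(t^{j+1})$ needs the trivial separate remark (made in the paper) that $\mathcal{E}(0)=[N]$ when $t^{j+1}=0$.
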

\noindent In light of Proposition \ref{alg_hom:proposition_linear_system}, the standard homotopy method makes the educated guess \( {\mathcal{S}^j= \Ej \backslash \Leav^j } \), which is always correct under the assumptions in Theorem \ref{alg_hom_other:thm_efron}. 
\begin{proof}
Throughout the proof, we write \( {\mathcal{E}=\Ej} , ~{\mathcal{A}=\mathcal{A}(u(t^j))}\), \( \mathcal{S}= \mathcal{S}^j \) and \( \Directions=\Directions(t^j,u(t^j)) \). Set \begin{equation*}
\beta_\mathcal{S}= \left ( A_\mathcal{S}^T A_\mathcal{S} \right)^\dagger p(t^j)_\mathcal{S} 
			=\left ( A_\mathcal{S}^T A_\mathcal{S} \right)^\dagger A_\mathcal{S}^T \frac{r^j}{t^j} \quad \text{ and } \quad 
			\beta_{\mathcal{S}^C} = 0 .
\end{equation*}
We  have to show \( \beta = d^{j+1} \). Since the sign-constraint is only imposed on indices in \( \mathcal{E}\backslash \mathcal{A} \), there exists an \( \epsilon \in (0,1) \) such that \( \beta(\tau)=(1-\tau) d^{j+1} + \tau \beta \) is in the feasible set of the nonnegative least squares problem  \eqref{directions:eq_characterization}. Further, by the KKT conditions \eqref{directions:eq_KKT_mult}, \eqref{directions:eq_KKT_eq_2}, and \eqref{directions:eq_KKT_ineq_4}, we have
\begin{equation*}
A_\mathcal{S}^T A_\mathcal{S} d^{j+1}_\mathcal{S} = p(t^j)_\mathcal{S} = A_\mathcal{S}^T A_\mathcal{S} \beta_\mathcal{S} ~.
\end{equation*}
Thus \( Ad^{j+1}=A\beta \) holds, and \( \beta (\tau) \in \Directions \). By the definition of \( \beta \),  \( \| \beta \|_2 \leq \| d^{j+1} \|_2 \) holds, which yields \( \| \beta(\tau) \|_2 \leq \| d^{j+1} \|_2 \) for all \( \tau \in [0,\epsilon] \). Since \( \beta(\tau) \in \Directions \), this yields \( \beta(\tau)=d^{j+1} \). Recalling the definition of \( \beta(\tau) \), the equality \( \beta=d^{j+1} \) follows. \\

The inclusion \( \Aj \subset \mathcal{S}^j \) holds by the definition of \( \mathcal{S}^j \), and \( \mathcal{S}^j \subseteq \Ej \) folows from Theorem \ref{directions:thm_characterization}. To see that \( \mathcal{S}^j \subseteq \mathcal{E}(t^{j+1}) \), we distinguish two cases. If \( t^{j+1} > 0 \), then \begin{equation*}
p(t^{j+1})_{\mathcal{S}^j} \overset{\eqref{directions:eq_p_rewrite}}{=} p(t^j)_{\mathcal{S}^j} + \frac{t^j-t^{j+1}}{t^{j+1}} (p(t^j)_{\mathcal{S}^j} - A_{\mathcal{S}^j}^T A d^{j+1} ) = p(t^j)_{\mathcal{S}^j}~,
\end{equation*}
showing that \( \mathcal{S}^j \subseteq \mathcal{E}(t^{j+1}) \). Since \( \mathcal{E}(0) = [N] \), \( \mathcal{S}^j \subseteq \mathcal{E}(t^{j+1}) \) also holds if \( t^{j+1} = 0 \).
\end{proof}

The first lemma describes a case of  a leaving coordinate , i.e., that a coordinate \( u(t)_i \) becomes zero which was previously non-zero.
\begin{lemma}\label{alg_hom_other:lem_our_leaving}
Let \( \Hit^j = \emptyset \),  \( \Leav^j= \{ i \} \), \( \Ej = \Aj \dot{\cup} \{ i \} \) and \( d^{j+1} \) as in Algorithm \ref{alg_hom:algorithm}.
Then one has that 
\begin{equation*}
d^{j+1}_{\Ej \backslash\{i\} } = \left ( A^T_{\Ej \backslash\{i\} } A_{\Ej \backslash\{i\} } \right)^\dagger p(t^j)_{\Ej \backslash\{i\} } \quad \text{and} \quad d^{j+1}_{(\Ej \backslash\{i\})^C }=0~.
\end{equation*}
Furthermore, if \( A_\Ej \) is injective, then \( A_i^T A d^{j+1} \not = p(t^j)_i \) and the index \( i \) leaves the equicorrelation set \( \Et \).
\end{lemma}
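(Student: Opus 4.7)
Abbreviate \( \mathcal{E} = \Ej \), \( \mathcal{A} = \Aj \), and \( p = p(t^j) \). Proposition \ref{alg_hom:proposition_linear_system} tells us that \( \mathcal{S}^j := \mathcal{A} \cup \supp(d^{j+1}) \) lies between \( \mathcal{A} \) and \( \mathcal{E} \), so the hypothesis \( \mathcal{E} = \mathcal{A} \dot{\cup} \{i\} \) leaves only the cases \( \mathcal{S}^j = \mathcal{A} \) or \( \mathcal{S}^j = \mathcal{E} \). The first case is exactly what we want, since the Proposition then yields the claimed formula \( d^{j+1}_{\mathcal{E}\setminus\{i\}} = (A_{\mathcal{E}\setminus\{i\}}^T A_{\mathcal{E}\setminus\{i\}})^\dagger p_{\mathcal{E}\setminus\{i\}} \) and \( d^{j+1}_{(\mathcal{E}\setminus\{i\})^C} = 0 \). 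The whole plan is therefore to rule out the alternative \( \mathcal{S}^j = \mathcal{E} \).

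The key is to extract structural input from the leaving event and invoke the Proposition a second time at step \( j-1 \). From \( \Leav^j = \{i\} \) and the linear relation \( u(t^j) = u(t^{j-1}) + (t^{j-1}-t^j)\,d^j \) one reads off \( d^j_i = -u(t^{j-1})_i/(t^{j-1}-t^j) \neq 0 \); continuity of the subgradient (Lemma \ref{notation:lem_basic_facts}) together with the constant sign of \( u(\cdot)_i \) on \( (t^j,t^{j-1}] \) forces \( p_i = \sgn(u(t^{j-1})_i) = -\sgn(d^j_i) \). Applying Proposition \ref{alg_hom:proposition_linear_system} at step \( j-1 \), the hypothesis \( \Hit^j = \emptyset \) gives \( \mathcal{E} \subseteq \Ejminus \), hence \( \Ejminus \cap \mathcal{E}(t^j) = \mathcal{E} \); and since \( i \in \mathcal{A}(u(t^{j-1})) \) and any \( k \in \mathcal{A} \setminus \mathcal{A}(u(t^{j-1})) \) must have \( d^j_k \neq 0 \) by linearity, the inclusions force \( \mathcal{S}^{j-1} = \mathcal{E} \). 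Thus \( d^j_\mathcal{E} = (A_\mathcal{E}^T A_\mathcal{E})^\dagger p(t^{j-1})_\mathcal{E} \), and the same continuity argument gives \( p(t^{j-1})_\mathcal{E} = p_\mathcal{E} \). If \( \mathcal{S}^j = \mathcal{E} \) also held, the Proposition applied at step \( j \) would produce the identical formula \( d^{j+1}_\mathcal{E} = (A_\mathcal{E}^T A_\mathcal{E})^\dagger p_\mathcal{E} \), so that \( d^{j+1} = d^j \). But then \( d^{j+1}_i = d^j_i \neq 0 \) with \( \sgn(d^{j+1}_i) = -p_i \), violating the sign constraint \( d^{j+1}_i p_i \geq 0 \) from Theorem \ref{directions:thm_characterization}. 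Hence \( \mathcal{S}^j = \mathcal{A} \), and the claimed formula follows.

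For the addendum, assume \( A_\mathcal{E} \) is injective. If \( A_i^T A d^{j+1} = p_i \) were to hold, then combined with the KKT identity \( A_\mathcal{A}^T A d^{j+1} = p_\mathcal{A} \) (which follows by restricting \eqref{directions:eq_KKT_mult} to \( \mathcal{A} \), using \( \lambda_\mathcal{A} = 0 \) from \eqref{directions:eq_KKT_ineq_2} and \( \theta_\mathcal{A} = 0 \) from \eqref{directions:eq_KKT_eq_2}) one would obtain \( A_\mathcal{E}^T A_\mathcal{E} d^{j+1}_\mathcal{E} = p_\mathcal{E} \); injectivity then forces \( d^{j+1}_\mathcal{E} = (A_\mathcal{E}^T A_\mathcal{E})^{-1} p_\mathcal{E} = d^j_\mathcal{E} \), contradicting \( d^{j+1}_i = 0 \neq d^j_i \). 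Plugging \( A_i^T A d^{j+1} \neq p_i \) into the identity \( p(t)_i = p_i + \tfrac{t^j-t}{t}(p_i - A_i^T A d^{j+1}) \) from \eqref{directions:eq_p_rewrite_2}, the requirement \( |p(t)_i| \leq 1 \) on \( [t^{j+1},t^j] \) combined with \( |p_i| = 1 \) forces \( p_i(p_i - A_i^T A d^{j+1}) < 0 \), whence \( |p(t)_i| < 1 \) strictly for \( t \) slightly below \( t^j \)---that is, \( i \) leaves the equicorrelation set. The main obstacle is the rigidity needed to conclude \( \mathcal{S}^{j-1} = \mathcal{E} \); once this identification is available, the Case-B contradiction and the second part of the lemma follow essentially mechanically from the sign constraint and injectivity, respectively.
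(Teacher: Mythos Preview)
Your proof is correct and follows essentially the same route as the paper's: invoke Proposition~\ref{alg_hom:proposition_linear_system} at both steps $j-1$ and $j$, establish $\mathcal{S}^{j-1}=\mathcal{E}$, and rule out $\mathcal{S}^j=\mathcal{E}$ via $d^{j+1}=d^j$. The only cosmetic difference is how the contradiction from $d^{j+1}=d^j$ is phrased---the paper appeals (implicitly) to the minimality of $t^j$, whereas you exhibit the explicit violation of the sign constraint $d^{j+1}_i p_i \ge 0$; both are valid and equally short.
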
 
\begin{proof} We set \( \mathcal{S}^j \) as in  Proposition \ref{alg_hom:proposition_linear_system}. 
Since \( \Ej = \Aj \dot{\cup}  \{ i \} \), either \( \mathcal{S}^j=\Aj \) or \( \mathcal{S}^j=\Ej \).
From  \( u(t^{j-1} )_i \not = 0 \) (by the definition of \( \Leav^j \)) and \( u(t^j)_l \not = 0 \) for all \( l \in \Ej \backslash \{ i \} = \Aj  \) together with Proposition \ref{alg_hom:proposition_linear_system}, it follows that \( \mathcal{S}^{j-1}= \Ej \). Now \( \mathcal{S}^{j} \not = \Ej = \mathcal{S}^{j-1}  \), as otherwise, again by Proposition \ref{alg_hom:proposition_linear_system}, \( d^{j+1}=d^{j} \), which is a contradiction. \\
Now assume that \( A_\Ej \), and hence also \( A_\Ej^T A_\Ej \), is injective and that \( A_i^T A d^{j+1} = p(t^j)_i \). Then \begin{equation*}
A_\Ej^T A_\Ej d^{j+1}_\Ej = p(t^j)_\Ej = A_\Ej^T A_\Ej d^{j}_\Ej~,
\end{equation*}
and thus \( d^{j+1}=d^{j} \), which is again a contradiction.
\end{proof}

The second lemma deals with the case that \( \Hit^j = \emptyset \) and \( \Leav^j = \emptyset \). Under the additional assumption that \( \Ej = \Aj \dot{\cup} \{ i \} \), this implies that \( i \in [N] \) is in the equicorrelation set for both \( t=t^{j-1} \) and \( t=t^j \), but on the interval \( [t^j,t^{j-1} ] \) the \( i \)-th component \( p(t)_i \) changes from \( +1 \) to \( - 1\) or vice versa while \( u(t)_i \) remains zero.

\begin{lemma}\label{alg_hom_other:lem_our_subsign}
Assume that  \( \Ej =\Ejminus \) and that \( i \in \Ej  \) is an index such that \( \mathcal{A}(u(t^j))=\mathcal{A}(u(t^{j-1}))=\Ej \backslash \{ i \} \) and \( p(t^j)_i \not = p(t^{j-1} )_i \). Then 
\begin{equation*}
d^{j+1}_{\Ej }= \left ( A_{\Ej }^T A_{\Ej } \right)^\dagger p(t^j)_{\Ej } 
\quad \text{and} \quad d^{j+1}_{(\Ej )^C}=0~.
\end{equation*}
Furthermore, \( d^{j+1}_i p(t^j)_i > 0 \).
\end{lemma}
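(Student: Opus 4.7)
The plan is to use Proposition \ref{alg_hom:proposition_linear_system}, which tells us that the support structure of \( d^{j+1} \) is controlled by \( \mathcal{S}^j := \mathcal{A}(u(t^j)) \cup \supp(d^{j+1}) \) and that \( d^{j+1}_{\mathcal{S}^j} = (A_{\mathcal{S}^j}^T A_{\mathcal{S}^j})^\dagger p(t^j)_{\mathcal{S}^j} \). Since \( \mathcal{A}(u(t^j)) = \Ej \backslash \{i\} \) and \( \mathcal{S}^j \subseteq \Ej \), only two possibilities remain: \( \mathcal{S}^j = \Ej \) or \( \mathcal{S}^j = \Ej \backslash \{i\} \). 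Once the latter case is ruled out, the stated formula for \( d^{j+1} \) is immediate; the strict inequality \( d^{j+1}_i p(t^j)_i > 0 \) then follows because \( i \in \mathcal{S}^j \backslash \mathcal{A}(u(t^j)) \) forces \( d^{j+1}_i \neq 0 \), while the sign constraint from Theorem \ref{directions:thm_characterization} already gives \( d^{j+1}_i p(t^j)_i \geq 0 \).

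Before attacking the main case, I would pin down \( \mathcal{S}^{j-1} \). The same dichotomy applies at step \( j-1 \): \( \mathcal{S}^{j-1} \) equals either \( \Ejminus = \Ej \) or \( \Ej \backslash \{i\} \). If \( i \in \mathcal{S}^{j-1} \), then Proposition \ref{alg_hom:proposition_linear_system} gives \( A_i^T A d^j = p(t^{j-1})_i \), and substituting this into the subgradient evolution formula \eqref{directions:eq_p_rewrite} at \( t = t^j \) yields \( p(t^j)_i = p(t^{j-1})_i \), contradicting the hypothesis \( p(t^j)_i \neq p(t^{j-1})_i \). Hence \( \mathcal{S}^{j-1} = \Ej \backslash \{i\} \) and in particular \( d^j_i = 0 \).

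Next I would rule out \( \mathcal{S}^j = \Ej \backslash \{i\} \) by contradiction. In that case \( \mathcal{S}^j = \mathcal{S}^{j-1} \). Because the two active sets coincide and \( u \) is linear (hence sign-preserving on each component with \( l \in \mathcal{A} \)) on the segment \( [t^j,t^{j-1}] \), we have \( p(t^j)_l = p(t^{j-1})_l \) for every \( l \in \Ej \backslash \{i\} \). Plugging into the pseudoinverse formula from Proposition \ref{alg_hom:proposition_linear_system} then forces \( d^{j+1} = d^j \). To derive a contradiction, I would show \( d^j \notin \Directions(t^j,u(t^j)) \). Concretely, solving \eqref{directions:eq_p_rewrite} at \( t=t^j \) using the sign flip \( p(t^{j-1})_i = -p(t^j)_i \) yields a closed-form value for \( A_i^T A d^j \) satisfying \( |A_i^T A d^j| > 1 \). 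Re-inserting this value into \eqref{directions:eq_p_rewrite} for \( t \) slightly below \( t^j \) shows that \( p(t)_i \) overshoots past \( p(t^j)_i = \pm 1 \), i.e.\ \( |p(t)_i| > 1 \), so the subgradient condition \( p(t) \in \partial \| u(t) \|_1 \) fails. Hence no \( \delta > 0 \) exists with \( u(t^j) + (t^j - t) d^j \in U_t \) on \( [t^j - \delta, t^j] \), contradicting \( d^{j+1} = d^j \).

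The main obstacle is this last step: one has to translate the qualitative hypothesis \( p(t^{j-1})_i \neq p(t^j)_i \) into the quantitative bound \( |A_i^T A d^j| > 1 \) and then exploit the Möbius-type dependence of \( p(t)_i \) on \( t \) in \eqref{directions:eq_p_rewrite} to produce the overshoot. All remaining steps are routine applications of Proposition \ref{alg_hom:proposition_linear_system} together with the fact that signs of nonzero coordinates cannot change along a single linear segment of the solution path.
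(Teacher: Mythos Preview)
Your proof is correct and follows the same architecture as the paper: invoke Proposition~\ref{alg_hom:proposition_linear_system} to reduce to the dichotomy \( \mathcal{S}^j \in \{\Ej,\Ej\backslash\{i\}\} \), first pin down \( \mathcal{S}^{j-1}=\Ej\backslash\{i\} \) via \eqref{directions:eq_p_rewrite}, then rule out \( \mathcal{S}^j=\Ej\backslash\{i\} \), and finish the strict inequality from \( i\in\mathcal{S}^j\backslash\Aj\subseteq\supp(d^{j+1}) \). The only substantive difference is in how you eliminate \( \mathcal{S}^j=\Ej\backslash\{i\} \). The paper simply observes that \( \mathcal{S}^j=\mathcal{S}^{j-1} \) together with \( p(t^j)_{\Aj}=p(t^{j-1})_{\Aj} \) would force \( d^{j+1}=d^j \), and then cites the bare fact \( d^{j+1}\neq d^j \) to conclude. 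This fact is available for free from the algorithm: \( t^j \) is chosen \emph{minimal} in step~10, so (for \( t^j>0 \)) the old direction \( d^j \) cannot lie in \( \Directions(t^j,u(t^j)) \). You instead re-derive \( d^j\notin\Directions(t^j,u(t^j)) \) in this particular configuration by computing \( A_i^TAd^j = p(t^{j-1})_i\,\tfrac{t^{j-1}+t^j}{t^{j-1}-t^j} \) explicitly and exhibiting the overshoot \( |p(t)_i|>1 \) for \( t<t^j \). That computation is valid, but it is extra work: the minimality of \( t^j \) already gives you \( d^{j+1}\neq d^j \) without any case-specific analysis, and the paper uses this shortcut uniformly across Lemmas~\ref{alg_hom_other:lem_our_leaving}--\ref{alg_hom_other:lem_our_hitting}.
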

\begin{proof} 
Again, for \( \mathcal{S}^j \) as in Proposition \ref{alg_hom:proposition_linear_system}, one has that  either \( \mathcal{S}^j= \Ej \backslash \{ i \} \) or \( \mathcal{S}^j= \Ej  \).
If \( \mathcal{S}^{j-1} = \mathcal{E}(t^{j-1}) \), Proposition \ref{alg_hom:proposition_linear_system} would yield 
\begin{equation*}
A_{\Ejminus}^T A d = p(t^{j-1})_{\Ejminus}~,
\end{equation*}
implying that \( p(t^{j-1})_i = (A^T A d^j )_i \), and hence, with \eqref{directions:eq_p_rewrite}, a contradiction to the assumption \( p(t^{j+1})_i \not = p(t^j)_i~. \)
Thus, \( \mathcal{S}^{j-1} = \Ej \backslash \{ i \} \). Since \( d^{j+1} \not = d^{j} \) and \( p(t^j)_{\mathcal{A}(u(t^{j-1}))} = p(t^{j-1})_{\mathcal{A}(u(t^{j-1}))} \), it follows that \( \mathcal{S}^j \not = \mathcal{S}^{j-1} \), i.e., \( \mathcal{S}^j = \Ej  \), which proves the first part of the lemma. \\
Lastly, since \( d^{j+1}_i p(t^j)_i \geq 0 \) by Algorithm \ref{alg_hom:algorithm} and \( i \in \mathcal{S}^j \backslash \Aj \subseteq \supp(d^{j+1}) \), it follows that \( d^{j+1}_i p(t^j)_i > 0 \).
\end{proof}

The third lemma describes the case of a hitting coordinate, i.e., a coordinate which has to be included in the equicorrelation set at \( t=t^j \).
\begin{lemma}\label{alg_hom_other:lem_our_hitting}
Let \( \Hit^j= \{i\},~ \Leav^j = \emptyset, \) and \( \Ej = \Aj \dot{\cup}\{ i \} \). Then we have that
\begin{equation*}
d^{j+1}_{\Ej} = \left( A_{\Ej }^T A_{\Ej} \right)^\dagger p(t^j)_{\Ej }~, \quad d^{j+1}_{\Ej^C} = 0 ~,
\end{equation*}
and \( d^{j+1}_i p(t^j)_i > 0 \).
\end{lemma}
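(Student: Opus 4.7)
The plan is to mirror the strategy of Lemmas \ref{alg_hom_other:lem_our_leaving} and \ref{alg_hom_other:lem_our_subsign}: use Proposition \ref{alg_hom:proposition_linear_system} to reduce the claim to identifying the support set $\mathcal{S}^j := \Aj \cup \supp(d^{j+1})$. Since $\Ej = \Aj \dot{\cup} \{i\}$, Proposition \ref{alg_hom:proposition_linear_system} leaves only the two possibilities $\mathcal{S}^j \in \{\Aj, \Ej\}$, and the formula claimed in the lemma is exactly the one produced by $\mathcal{S}^j = \Ej$, so the core task is to rule out $\mathcal{S}^j = \Aj$.

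First I would locate $\mathcal{S}^{j-1}$. The assumptions $\Hit^j = \{i\}$ and $\Leav^j = \emptyset$ give $\Ejminus = \Aj$ and $\mathcal{A}(u(t^{j-1})) \subseteq \Aj$. Any $l \in \Aj \setminus \mathcal{A}(u(t^{j-1}))$ satisfies $u(t^j)_l = (t^{j-1} - t^j) d^j_l \neq 0$, so $l \in \supp(d^j)$; combined with $\mathcal{S}^{j-1} \subseteq \mathcal{E}(t^{j-1}) = \Aj$, this forces $\mathcal{S}^{j-1} = \Aj$ and hence $d^j_\Aj = (A_\Aj^T A_\Aj)^\dagger p(t^{j-1})_\Aj$.

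Next, suppose toward a contradiction that $\mathcal{S}^j = \Aj$. A continuity argument for $p$ yields $p(t^j)_\Aj = p(t^{j-1})_\Aj$: for $l \in \mathcal{A}(u(t^{j-1}))$, no sign change occurs on $[t^j, t^{j-1}]$ (since $\Leav^j = \emptyset$), so $p(t)_l = \sgn(u(t^{j-1})_l)$ is constant; for $l \in \Aj \setminus \mathcal{A}(u(t^{j-1}))$, on $(t^j, t^{j-1})$ one has $u(t)_l = (t^{j-1}-t) d^j_l \neq 0$ with constant sign $\sgn(d^j_l)$, and continuity of $p$ extends this to both endpoints. Proposition \ref{alg_hom:proposition_linear_system} then gives
\[
d^{j+1}_\Aj = (A_\Aj^T A_\Aj)^\dagger p(t^j)_\Aj = (A_\Aj^T A_\Aj)^\dagger p(t^{j-1})_\Aj = d^j_\Aj,
\]
so $d^{j+1} = d^j$. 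But this would mean $d^j \in \Directions(t^j, u(t^j))$, contradicting the definition of $t^j$ as the maximal endpoint of validity of $d^j$ (cf.\ Remark \ref{directions:rem_explicit_t}). Hence $\mathcal{S}^j = \Ej$, which yields the claimed expression for $d^{j+1}$.

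For the strict inequality, $\mathcal{S}^j = \Ej = \Aj \cup \{i\}$ with $i \notin \Aj$ forces $i \in \supp(d^{j+1})$, i.e., $d^{j+1}_i \neq 0$. Combining this with the feasibility constraint $d^{j+1}_i p(t^j)_i \geq 0$ from Theorem \ref{directions:thm_characterization} (applicable since $i \in \Ej \setminus \Aj$) gives $d^{j+1}_i p(t^j)_i > 0$. The only delicate step is the continuity argument establishing $p(t^j)_\Aj = p(t^{j-1})_\Aj$; the rest is bookkeeping with Proposition \ref{alg_hom:proposition_linear_system}.
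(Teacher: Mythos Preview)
Your proposal is correct and follows essentially the same route as the paper: use Proposition \ref{alg_hom:proposition_linear_system} to reduce to the dichotomy $\mathcal{S}^j\in\{\Aj,\Ej\}$, pin down $\mathcal{S}^{j-1}=\Aj$, and rule out $\mathcal{S}^j=\Aj$ via the contradiction $d^{j+1}=d^j$; the strict sign inequality is then handled exactly as in Lemma \ref{alg_hom_other:lem_our_subsign}. The only difference is that you spell out the equality $p(t^j)_{\Aj}=p(t^{j-1})_{\Aj}$ through a continuity argument, whereas the paper treats it as implicit (it is in fact a direct consequence of the displayed computation at the end of the proof of Proposition \ref{alg_hom:proposition_linear_system}, which gives $p(t^j)_{\mathcal{S}^{j-1}}=p(t^{j-1})_{\mathcal{S}^{j-1}}$).
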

\begin{proof} By assumption \( \Ej = \mathcal{A}(u(t^j)) \dot{\cup} \{ i \} \). Then, either \( \mathcal{S}^j = \mathcal{A}(u(t^j)) \) or \( \mathcal{S}^j = \Ej \). To prove the lemma, it suffices to show that \( \mathcal{S}^j \not = \mathcal{A}(u(t^j)) \). \\
Since \( u(t^j)_l \not = 0 \) for all \( l \in \Aj \) and \( \Leav^j = \emptyset \), it follows that \( \mathcal{S}^{j-1} = \Aj \). If \( \mathcal{S}^j = \mathcal{A}(u(t^j)) \), then \( d^{j+1}=d^j \), which is a contradiction. The last inequality follows as in Lemma \ref{alg_hom_other:lem_our_subsign}.
\end{proof}
Lemma \ref{alg_hom_other:lem_our_hitting} was already proven in \cite[Lemma 5.4]{Efron2004} under the additional assumption that \( A_{\Ej} \) is injective. The most difficult part in their proof is to show that \( d^{j+1}_i \) agrees in sign with \( p(t^j)_i \), i.e., \( d^{j+1}_i p(t^j)_i \geq 0 \).  

\begin{proof}[Proof of Theorem \ref{alg_hom_other:thm_coincide}]
The result follows from the first parts of the Lemmas \ref{alg_hom_other:lem_our_leaving}, \ref{alg_hom_other:lem_our_subsign} and \ref{alg_hom_other:lem_our_hitting} if we show that the assumption \( |\Ej \backslash \Aj | = 1 \) is satisfied for every \( j=0,\hdots,K-1 \). Due to the one-at-a-time condition, it suffices to note that \( |\mathcal{E}(t)\backslash \mathcal{A}(u(t))|= 0 \) for every \( t\in (t^{j+1},t^j) \), which follows directly from the second parts of the Lemmas \ref{alg_hom_other:lem_our_leaving}, \ref{alg_hom_other:lem_our_subsign} and \ref{alg_hom_other:lem_our_hitting}.
\end{proof}

\begin{remark}
Our analysis shows that the injectivity of \( A_\Ej \) is only needed to show that  \( 1 = |\Ej \backslash \Aj |  \) at every iteration, and only in the scenario of Lemma \ref{alg_hom_other:lem_our_leaving}. Essentially, we have to exclude that a leaving index \( i \in \Leav^j =  \mathcal{A}(u(t^{j-1})) \backslash \Aj \) remains in the equicorrelation set, i.e.,   \( i \in \Et \) for all \( t \in (t^{j+1},t^j) \). As long as the solution path is unique, this would contradict Lemma \ref{alg_hom:lem_set_valued_linearity}. In the case of non-uniqueness, there may be additional kinks in the interior of one of the \( I_{\mathcal{E},s} \) (as defined in Lemma \ref{alg_hom:lem_set_valued_linearity}), where we do not know yet whether and how it can be excluded.
\end{remark}
\begin{figure}[!t]
\begin{subfigure}{0.49\textwidth}
 \includegraphics[width=\textwidth,keepaspectratio=true]{./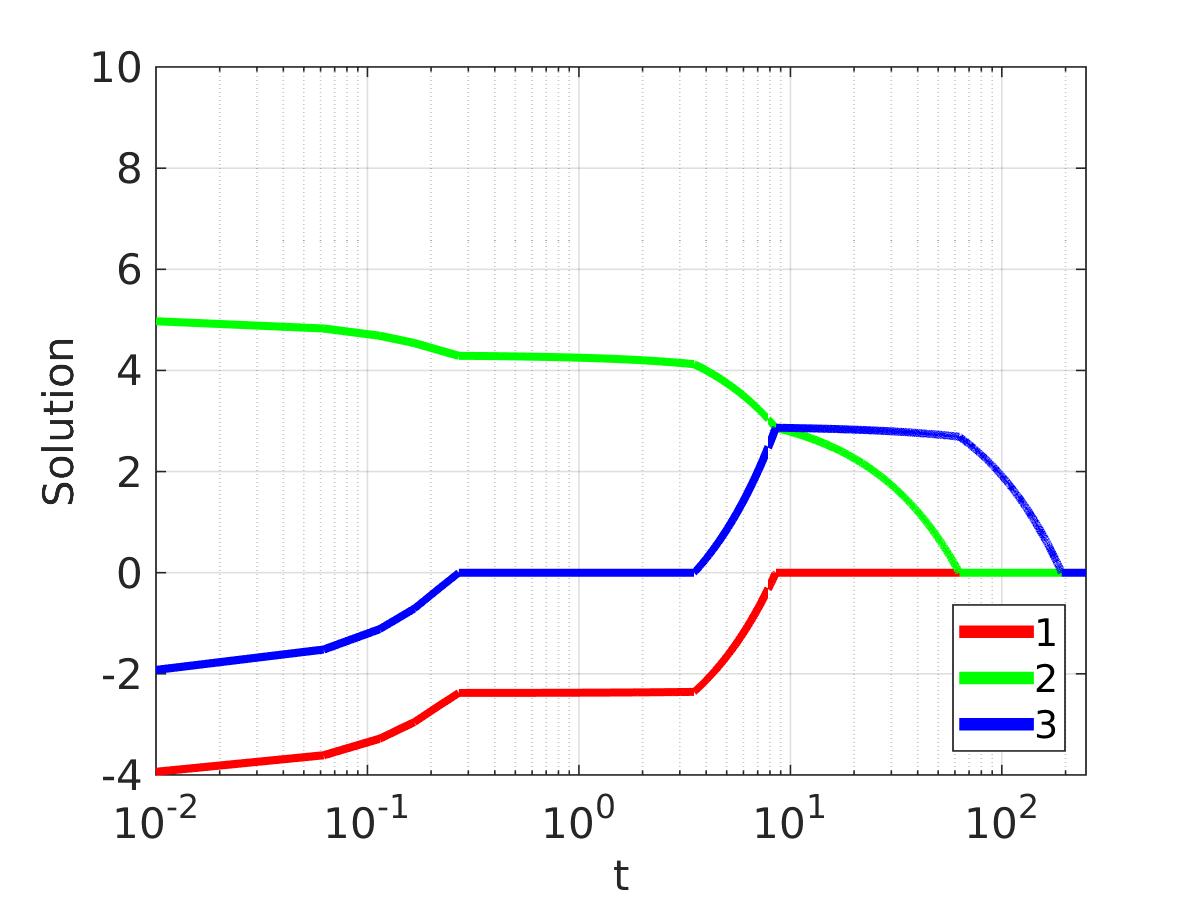} 
 \subcaption{Output of Algorithm \ref{alg_hom:algorithm}}
 \end{subfigure}
 \begin{subfigure}{0.49\textwidth}
\includegraphics[width=\textwidth,keepaspectratio=true]{./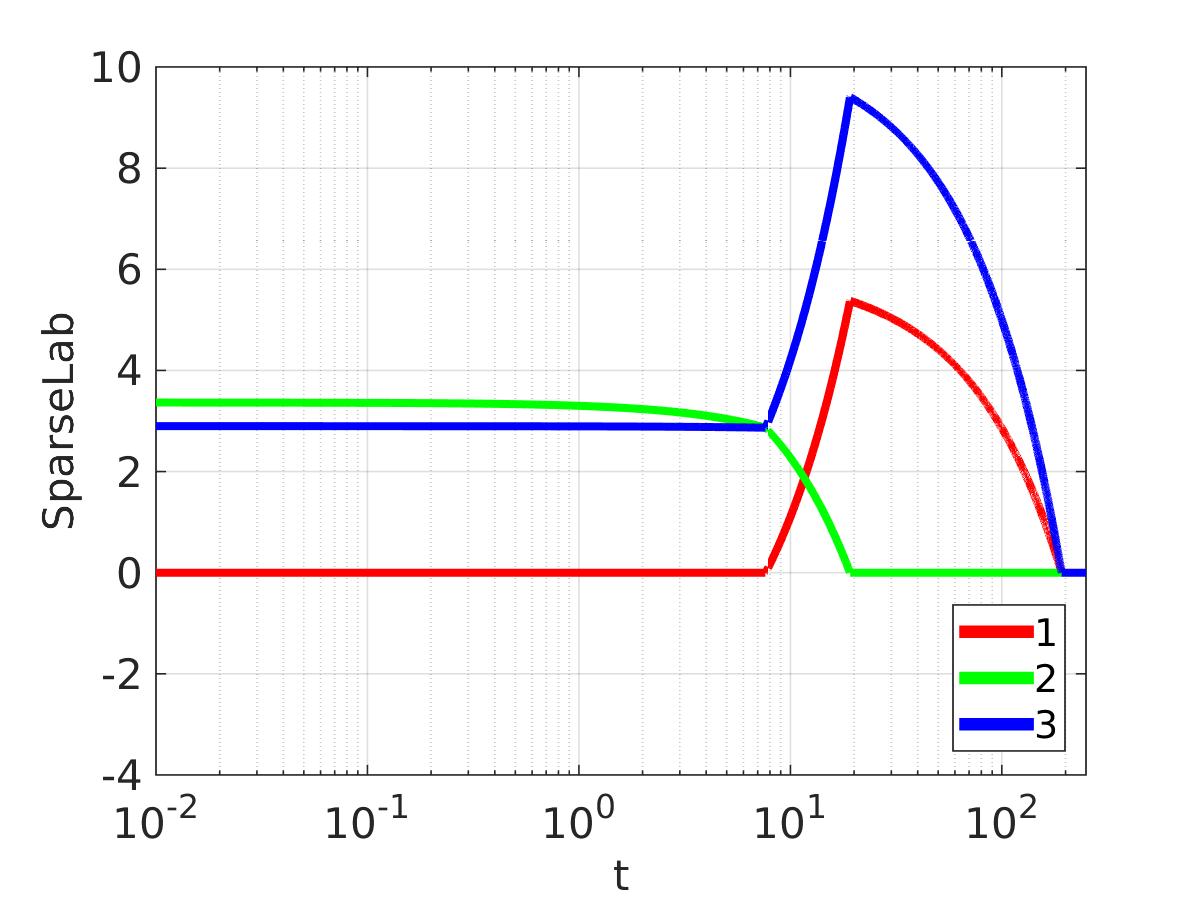} 
\subcaption{Output of SparseLab}
\end{subfigure}
\begin{subfigure}{0.5\textwidth}
\includegraphics[width=\textwidth,keepaspectratio=true]{./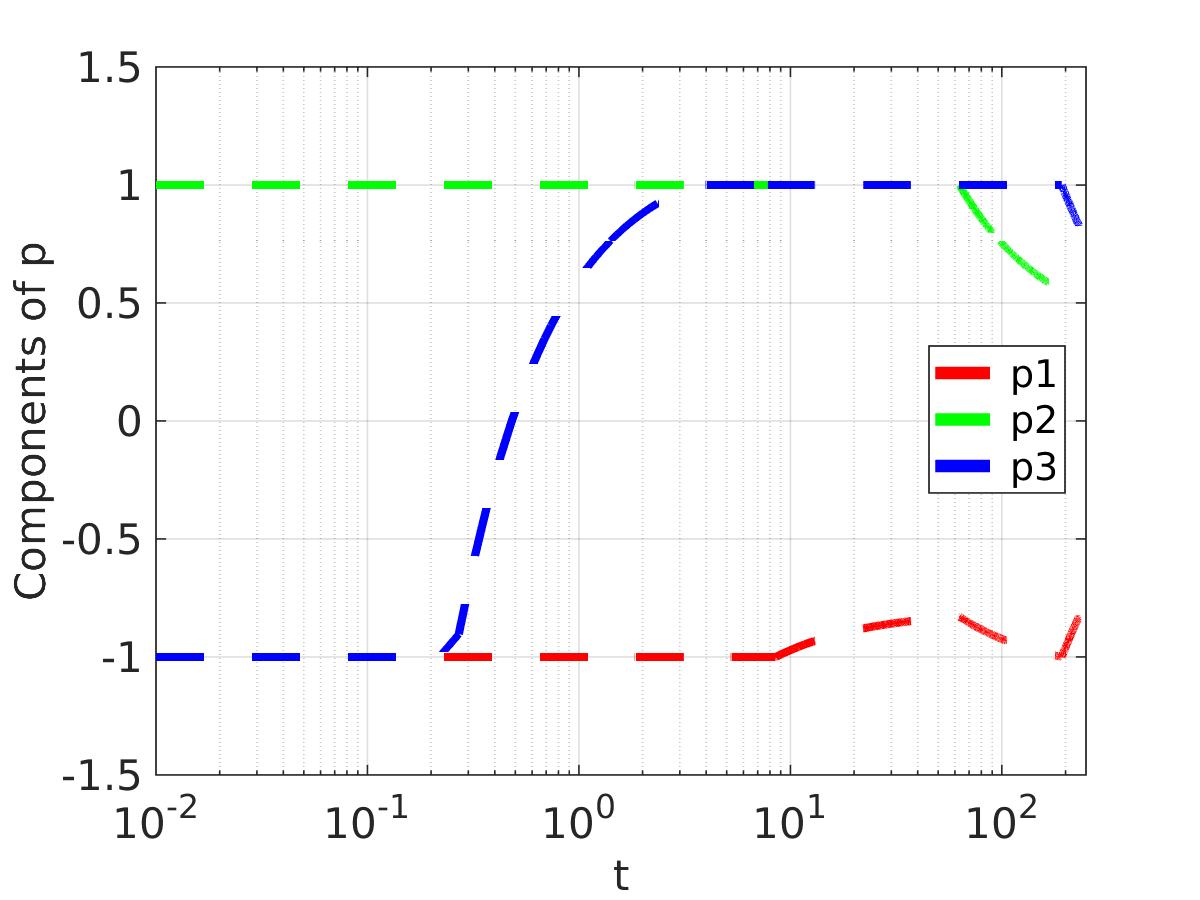} 
\subcaption{Components of \( p(t) \) }
\end{subfigure}
\caption{We display the behaviour of the generalized homotopy method and the output of SparseLab in an example where the one-at-a-time condition does not hold. On certain intervals, the output of SparseLab has a non-zero first component while the correct subgradient satisfies \( |p_1(t)|<1 \). Thus, SparseLab does not produce a solution path.}
\label{alg_hom_other:fig_loris}
\end{figure}

It was noted in \cite{Efron2004,Loris2008} that without the one-at-a-time condition, the standard homotopy method can encounter sign inconsistencies.
An example with \( A \in \mathbb{R}^{3 \times 3 } \) is given in \cite{Loris2008}, namely
 \begin{equation}\label{alg_hom_other:eq_loris_example}
 A= \begin{bmatrix}
 -3 & 4 & 4 \\ -5 & 1 & 4 \\ 5 & 1 & -4 
 \end{bmatrix}
 \quad \text{and} \quad 
 f= \begin{bmatrix}
 24 \\ 17 \\ -7 
 \end{bmatrix}~.
 \end{equation}
The outputs of the generalized homotopy method and SparseLab \cite{SparseLab2007}, one of the most popular implementations of the homotopy method \cite{Osborne2000,Efron2004},
 are displayed in Figure \ref{alg_hom_other:fig_loris}.  At the first kink \( t=192 \), the \( 3^{rd} \) component in the output produced by SparseLab enters the active set with the wrong sign. Therefore, in this example SparseLab is unable to produce a full solution path. As Algorithm \ref{alg_hom_other:standard_algorithm} has an additional feature to detect sign inconsistencies, it would exit at \( t = 192 \). \\
Notice that the matrix \( A \) is invertible, and thus the solution path is unique. The wrong solution path produced by SparseLab is solely the result of the missing one-at-a-time condition, and not of non-uniqueness. \\

In \cite{Loris2008}, based on ideas in \cite{Efron2004}, the following strategy was proposed:
Instead of choosing \( \StandardS^j = \Ej \backslash \Leav^j \) in Algorithm \ref{alg_hom_other:standard_algorithm}, loop over all sets \( \StandardS \subseteq [ N ] \) with \( \mathcal{A}(u(t^j)) \subseteq \StandardS \subseteq \Ej \), compute
\begin{equation*}
d_{\StandardS} = \left (A_{\StandardS}^T A_{\StandardS} \right)^\dagger p(t^j)_{\StandardS},  \quad \text{and set} \quad 
d_{ \StandardS ^C } = 0 ~.
\end{equation*}
Choose \( \StandardS^j = \StandardS \) as soon as \( t^{j+1}< t^j \), i.e., \( d \in \Directions(t^j,u(t^j)) \). We call this the homotopy method with looping (see Algorithm \ref{alg_hom_other:algorithm_looping}).\\ 

\begin{algorithm}
\caption{Homotopy Method With Looping \cite{Efron2004,Loris2008}}\label{alg_hom_other:algorithm_looping}
\begin{algorithmic}[1]
\State {\textbf{Input:} data \( f \in \mathbb{R}^m\), matrix \( A\in \mathbb{R}^{m\times N} \) } 
\State {\textbf{Output:} number of steps \( K \), sequence \( t^0,\hdots,t^K \) of regularization parameters, sequence \( u(t^0),\hdots,u(t^K) \) of solutions}
\State {\textbf{Initialization:} Set \( t^0=\| A^T f \|_\infty~\text{and}~ u(t^0)=0 \).}
\For { \( j=0,1,\hdots \)} 
\If { \( t^j= 0 \) }
\State { Break }
\EndIf
\For { \( S \subseteq [N] \) with \( \mathcal{A}(u(t^j)) \subseteq S \subseteq \mathcal{E}(u(t^j)) \)} 
\State { Compute 
\begin{equation*}
d^{j+1}_{\StandardS} = \left (A_{\StandardS}^T A_{\StandardS} \right)^\dagger p(t^j)_{\StandardS}  \quad \text{and set} \quad d^{j+1}_{\left( \StandardS \right)^C } = 0 ~.
\end{equation*}
}
\State { Find the minimal \( t^{j+1} \geq 0 \) such that \( u(t) \) solves \eqref{directions:eq_variational_problem} on \( [t^{j+1},t^j] \).}
\If { \( t^{j+1} < t^j \) }
\State { Break }
\EndIf
\EndFor
\EndFor
\end{algorithmic}
\end{algorithm}
From Proposition \ref{alg_hom:proposition_linear_system} it follows that the homotopy method with looping finds a direction at every iteration. This is, at least in the case of non-uniqueness, a non-trivial result. Notice that the homotopy method with looping does not necessarily compute the direction with minimal \( \ell_2 \) norm. Therefore, the finite termination of the algorithm is unclear. \\
Besides providing a theoretical foundation to the homotopy method with looping, the characterization of the set of possible directions (Theorem \ref{directions:thm_characterization})
 can also improve its performance. The loop over all sets \( \StandardS \subseteq [N] \) with \( \mathcal{A}(u(t^j)) \subseteq \StandardS \subseteq \Ej \) can be interpreted as a rudimentary active-set strategy to solve the nonnegative least squares problem \eqref{directions:eq_characterization}, even though this was not explicitly noted. As soon as \( | \Ej \backslash \mathcal{A}(u(t^j))| \) becomes large this methods becomes infeasible. Indeed, we would have to solve \( 2^{| \Ej \backslash \mathcal{A}(u(t^j))| } \) linear systems. Empirical tests show that small random Bernoulli matrices, for instance \( A \in \mathbb{R}^{20 \times 50} \), regularly yield \( | \Ej \backslash \mathcal{A}(u(t^j))| \geq 18 \). Nevertheless we consider their work \cite{Loris2008} an important step towards understanding the solution paths of \eqref{directions:eq_variational_problem} even when the one-at-a-time condition fails.

\begin{figure}[!t]
\begin{subfigure}{0.49\textwidth}
 \includegraphics[width=\textwidth,keepaspectratio=true]{./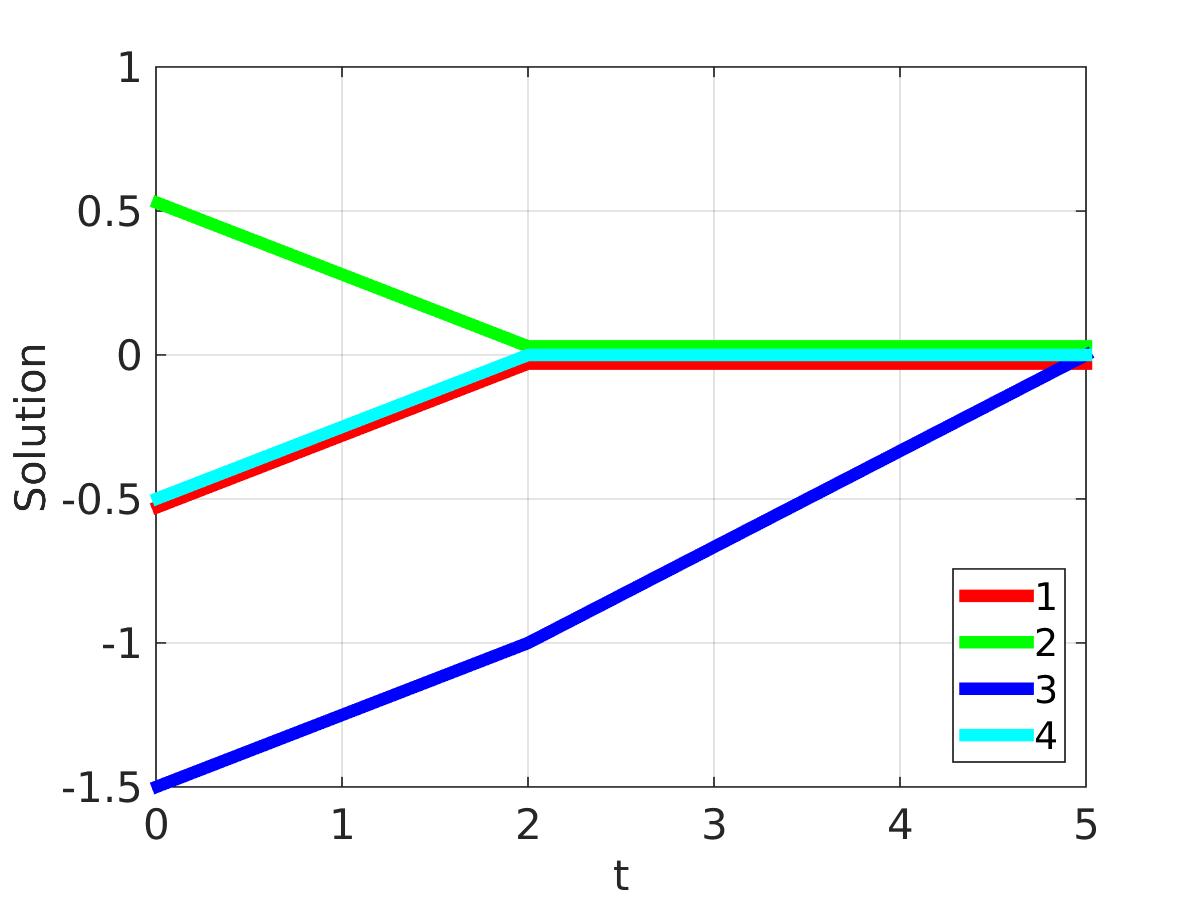} 
 \subcaption{Components of \( u(t) \)}
 \end{subfigure}
 \begin{subfigure}{0.49\textwidth}
\includegraphics[width=\textwidth,keepaspectratio=true]{./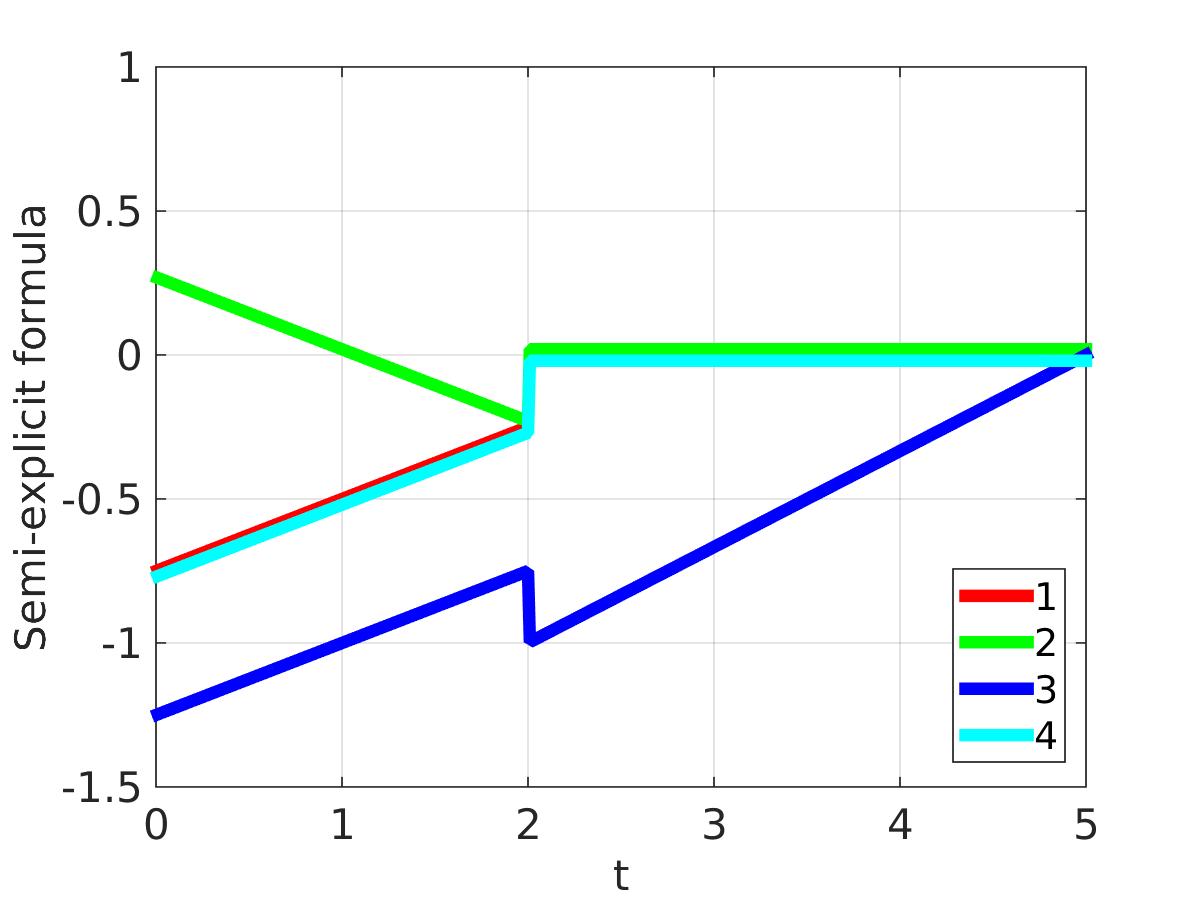} 
\subcaption{Components of \( \beta(t) \)}
\end{subfigure}
\begin{subfigure}{0.5\textwidth}
\includegraphics[width=\textwidth,keepaspectratio=true]{./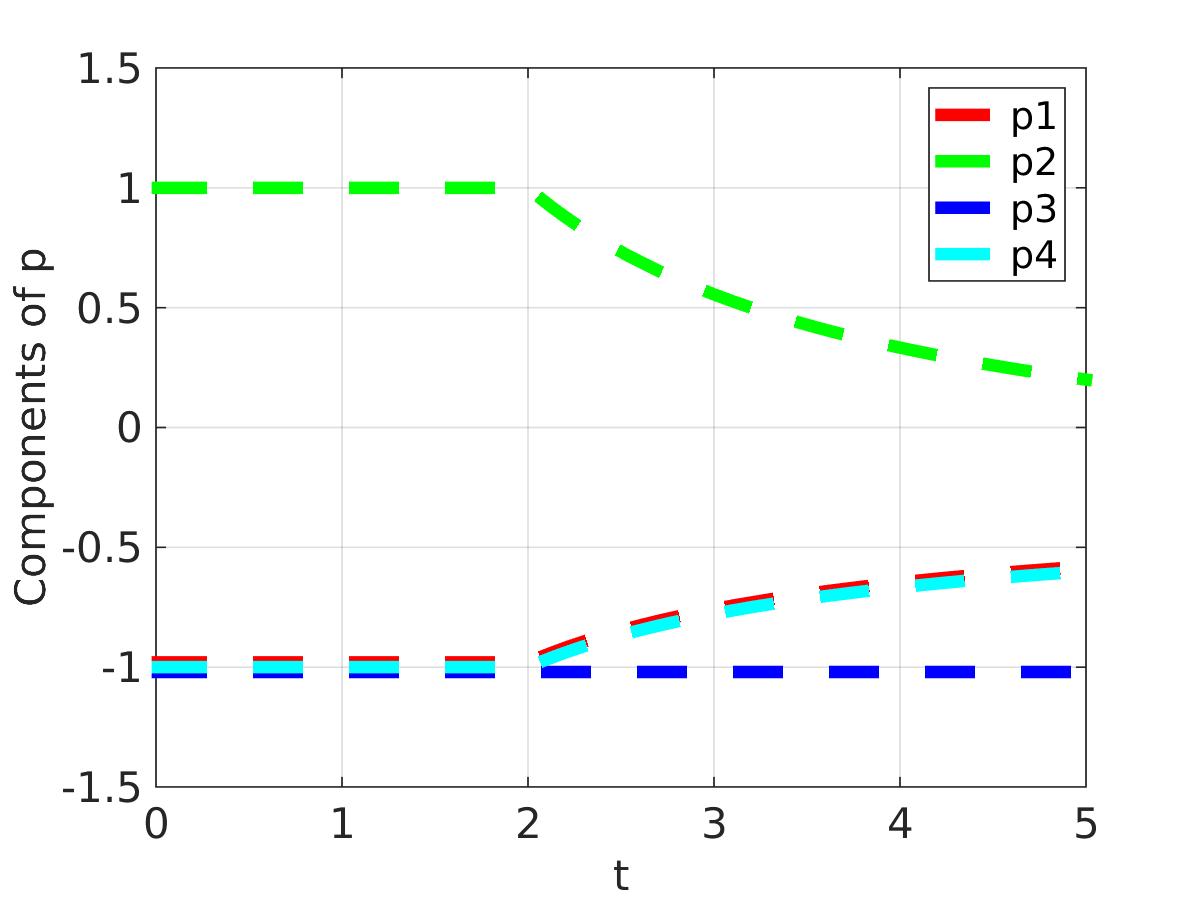} 
\subcaption{Components of \( p(t) \) }
\end{subfigure}
\caption{For \( A \) and \( f \) as in \eqref{alg_hom_other:eq_example_tib} we compare a solution path \( u(t) \) to the semi-explicit \( \beta(t) \) as in \eqref{alg_hom_other:eq_beta}. In this example, \( \beta(t) \) is not a solution path.}
\label{alg_hom_other:fig_tib}
\end{figure}

The injectivity assumption in Theorem \ref{alg_hom_other:thm_efron} is mainly needed to prevent the non-uniqueness of the solution path. Scenarios without uniqueness assumptions were studied in \cite{Tibshirani2013}, but again only under the (implicit) assumption of the one-at-a-time condition.
The results \cite[Lemma 9 and Section 3.1] {Tibshirani2013} state that a continuous and piecewise linear solution path is given by the semi-explicit formula
\begin{equation}\label{alg_hom_other:eq_beta}
\beta_{\mathcal{E}(t)}= (A_{\mathcal{E}(t)})^\dagger \left( f - ( A_{\mathcal{E}(t)}^T )^\dagger~ t~p(t)_{\mathcal{E}(t)} \right) \quad \text{and} \quad \beta_{(\mathcal{E}(t))^C} = 0 ~.
\end{equation}
{\pagebreak[3] Although  \( \beta(t) \in U_t \) for \( A \) and \( f \) as in the previous example \eqref{alg_hom_other:eq_loris_example}, this approach in general only applies under the one-at-a-time condition.}
As an example, consider 
\begin{equation}\label{alg_hom_other:eq_example_tib}
A= \begin{bmatrix}
-1 & +1 & +1 & +1 \\
+1 & -1 & +1 & +1 \\
+1 & +1 & +1 & -1 \\
\end{bmatrix},~\quad 
f=
\begin{bmatrix}
-1 \\
-3 \\
-1 
\end{bmatrix}
\quad \text{and} \quad 
t=2 ~.
\end{equation}
Then \( u(2)=\begin{bmatrix} 0 & 0 & -1 & 0 \end{bmatrix}^T \) is a solution and  \( p(2)= \begin{bmatrix} -1 & 1 & -1 & -1 \end{bmatrix}^T \) is the corresponding subgradient. But \eqref{alg_hom_other:eq_beta} yields \( \beta(2)= \begin{bmatrix} -1/4 & -1/4 & -3/4 & -1/4 \end{bmatrix} \), and thus the second component has the wrong sign. As can be seen in Figure \ref{alg_hom_other:fig_tib}, the path \( \beta(t) \) is neither continuous nor does it solve \( \beta(t) \in U_t \) for all \( t \geq 0 \). \\
 
\subsection{Adaptive Inverse Scale Space Method}
The adaptive inverse scale space (aISS) method is a fast algorithm to compute \( \ell_1\)-minimizing solutions of linear systems. Instead of calculating the minimizers of variational problems with varying regularization parameters \( t \), it computes an exact solution to the differential inclusion
\begin{equation}\label{alg_hom_other:eq_iss}
\begin{cases}
\begin{aligned}
 \partial_\nu q(t) &= A^T ( f - A v(t) ) \quad \text{with} \quad  q(t) \in \partial \| v(t) \|_1 \\
 q(0) &= 0 
\end{aligned}
\end{cases}~.
\end{equation}
The following theorem is proven in \cite[Theorem 1 and 2]{Burger2012}.
\begin{theorem}[\cite{Burger2012}]
There exists a finite sequence of times 
\begin{equation*}
0=t^0 < t^1 < t^2 < \hdots < t^K < t^{K+1} = \infty 
\end{equation*}
such that for all \( k=0,\hdots, K \)
\begin{equation*}
v(t)=v(t^k), \quad q(t)=q(t^k)+(t-t^k)~ A^T ( f- Av(t^k)), 
\end{equation*}
for \( t \in [t^k,t^{k+1}) \) is a solution of the inverse scale space flow. Here, \( v(t^k) \) is a solution of 
\begin{equation}\label{alg_hom_other:eq_iss_primal_update}
v(t^k) \in \argmin_{v \in \mathbb{R}^N} \| Av -f \|_2^2 \quad \text{s.t.} ~ q(t^k) \in \partial \| v \|_1 ~.
\end{equation}
Furthermore, \( v(t) \) is an \( \ell_1 \)-minimizing solution of \( A^TA v = A^T f \) for all \( t \geq t^K \).
\end{theorem}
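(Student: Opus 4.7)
The plan is to construct the sequence $(t^k, v(t^k), q(t^k))_{k \geq 0}$ inductively, with $v$ piecewise constant and $q$ piecewise affine, and then verify a posteriori that the resulting pair solves the differential inclusion \eqref{alg_hom_other:eq_iss}. The initialization is $t^0 = 0$, $v(t^0) = 0$, $q(t^0) = 0$, which is consistent with $q(0) \in \partial \| 0 \|_1 = \{ p \in \mathbb{R}^N \colon \| p \|_\infty \leq 1 \}$ and with the stated initial condition.

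Given the triple $(t^k, v(t^k), q(t^k))$, I would first impose the ansatz $v(t) \equiv v(t^k)$ on an interval $[t^k, t^{k+1})$. The differential inclusion then forces the affine expression $q(t) = q(t^k) + (t - t^k)\, A^T(f - A v(t^k))$, and the subgradient requirement $q(t) \in \partial \| v(t^k) \|_1$ splits into two parts: on $\operatorname{supp}(v(t^k))$ the entries of $q(t)$ must coincide with the fixed signs of $v(t^k)$, which is a constraint constant in $t$ and therefore forces $(A^T(f - A v(t^k)))_i = 0$ for every $i \in \operatorname{supp}(v(t^k))$; off the support one needs $|q(t)_i| \leq 1$ as a pointwise-in-$t$ constraint. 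The first condition is precisely the stationarity of the constrained least squares problem \eqref{alg_hom_other:eq_iss_primal_update} on the unconstrained coordinates, which explains why $v(t^k)$ is defined as its minimizer. The second condition identifies the next kink $t^{k+1}$ as the first time at which a previously strict inequality $|q(t)_i| < 1$ becomes saturated; at that moment $v$ is updated by solving \eqref{alg_hom_other:eq_iss_primal_update} with the enlarged set of sign constraints.

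The main obstacle is proving that the sequence $(t^k)$ is finite. The configuration driving the dynamics on $[t^k, t^{k+1})$ is encoded by the active set $\mathcal{E}^k = \{ i \colon |q(t^k)_i| = 1 \}$ together with the sign pattern $q(t^k)_{\mathcal{E}^k} \in \{ \pm 1 \}^{|\mathcal{E}^k|}$, yielding at most $3^N$ admissible configurations. A monotonicity argument is therefore the key: the residual $\| A v(t^k) - f \|_2^2$ is nonincreasing in $k$, since $v(t^{k-1})$ is feasible for the constrained problem defining $v(t^k)$ (the sign pattern can only grow at a kink), and one can show that strict decrease occurs outside of degenerate substeps, which rules out revisiting a configuration and forces termination after finitely many steps.

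Once $t = t^K$ is reached and the flow extends to $[t^K, \infty)$, the same two-part analysis gives $(A^T(f - A v(t^K)))_i = 0$ for all $i \in \operatorname{supp}(v(t^K))$ together with $\| q(t) \|_\infty \leq 1$ for all $t \geq t^K$; since $q$ grows linearly in the direction $A^T(f - A v(t^K))$, the latter enforces $A^T(f - A v(t^K)) = 0$, i.e., the normal equation. Finally, the fact that $q(t^K)$ lies simultaneously in $\partial \| v(t^K) \|_1$ and in $\{ \| \cdot \|_\infty \leq 1 \}$ provides the dual certificate for $\ell_1$-minimality of $v(t^K)$ among all solutions of $A^T A v = A^T f$, closing the argument.
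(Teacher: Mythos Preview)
The paper does not prove this theorem; it is quoted as a result of \cite{Burger2012} (Theorems~1 and~2 there), and no argument is given or sketched in the present paper. There is therefore no in-paper proof against which to compare your proposal.

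Taken on its own, your outline is reasonable through the inductive construction of $(t^k,v(t^k),q(t^k))$ and the final $\ell_1$-minimality certificate, but the finite-termination step is where the real content lies and your treatment of it is incomplete. You correctly observe that $v(t^{k-1})$ is feasible for the constrained least-squares problem at step $k$ (the sign constraints on $\operatorname{supp}(v(t^{k-1}))$ persist to $t^k$ since the corresponding entries of $A^T(f-Av(t^{k-1}))$ vanish), so $\|Av(t^k)-f\|_2^2$ is nonincreasing. But monotonicity alone does not exclude an infinite sequence with stagnating residual. Your phrase ``strict decrease occurs outside of degenerate substeps, which rules out revisiting a configuration'' is not an argument: you have neither defined what a degenerate substep is, nor shown that only finitely many can occur consecutively, nor explained why distinct configurations with equal residual cannot cycle. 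This is precisely the delicate point---analogous to the phenomenon in Proposition~\ref{hom_alg:prop_infinite_kinks} that motivates the $\ell_2$-minimal choice rule in the present paper---and it needs a concrete mechanism (a strictly monotone auxiliary quantity, or a combinatorial/lexicographic argument on the configuration data) to be closed.
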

The aISS method has striking similarities  to the generalized homotopy method. First, a seemingly continuous problem, i.e., a differential inclusion, can be solved completely by knowing the solution at finitely many points. While the generalized homotopy method produces a piecewise linear path \( u(t) \), the path \( v(t) \) of the aISS method is piecewise constant. \\
Second, both methods solve nonnegative least squares problems to calculate the solution path. To see this, note that in the aISS method
\begin{equation*}
q(t^k) \in \partial \| v \|_1 ~ \Leftrightarrow  ~ v_i q(t^k)_i \geq 0 ~ \forall i \text{ with } |q(t^k)_i|=1, ~ v_i = 0 ~ \forall i \text{ with } |q(t^k)_i | < 1 ~.
\end{equation*}
Although in a different context, the link between the inverse scale space flow and variational methods is also studied in  \cite{Burger2016}.

\section{Conclusions and Future Research}
In this paper, we have introduced a generalized homotopy method which computes a full solution path of \( \ell_1 \)-regularized problems in finitely many iterations. In contrast to previous homotopy methods, it provably works for an arbitrary combination of a measurement matrix and a data vector, requiring neither the uniqueness of the solution path nor the one-at-a-time condition. The backbone of the generalized homotopy method is a characterization of the set of possible directions by a nonnegative least squares problem.\\
In future research, we will extend the proposed homotopy method to arbitrary polyhedral regularizations. Furthermore, we will investigate its applicability for generalizing the ideas of nonlinear spectral decompositions considered in \cite{Gilboa2014,Burger2016} to more general data fidelity terms.

\section*{Acknowledgements}
DC and MM were supported by the ERC Starting Grant ``ConvexVision''. FK's contribution was supported by the German Science Foundation DFG in context of the Emmy Noether junior research group KR 4512/1-1 (RaSenQuaSI).


\bibliographystyle{siamplain}
\bibliography{./Misc_Arxiv}

\end{document}